\documentclass[11pt,a4paper]{article}
\usepackage[utf8]{inputenc} 
\usepackage{amsmath}
\usepackage{amssymb}
\usepackage{amsthm}
\usepackage{colortbl,xcolor}

\definecolor{Gray}{gray}{0.85}
\usepackage{epsfig}
\usepackage{verbatim}
\usepackage{color}
\include{pspicture}

\usepackage{hyperref}

\newcommand{\ninseps}[3]{
\begin{figure}[h]
\begin{center}
 \scalebox{#3}{\includegraphics{#1}}
\end{center} 

\vspace{-0.8cm}
\caption{\hspace{0.25cm}#2\label{f:#1}}
\end{figure}
}

\DeclareSymbolFont{AMSb}{U}{msb}{m}{n}
\DeclareMathSymbol{\N}{\mathbin}{AMSb}{"4E}
\DeclareMathSymbol{\Z}{\mathbin}{AMSb}{"5A}
\DeclareMathSymbol{\R}{\mathbin}{AMSb}{"52}
\DeclareMathSymbol{\Q}{\mathbin}{AMSb}{"51}
\DeclareMathSymbol{\I}{\mathbin}{AMSb}{"49}
\DeclareMathSymbol{\C}{\mathbin}{AMSb}{"43}
\DeclareMathSymbol{\F}{\mathbin}{AMSb}{"46}
\DeclareMathSymbol{\LL}{\mathbin}{AMSb}{"4C}
\newcommand{\E}{\mathbb{E}}
\newcommand{\p}{\mathbb{P}}

\newtheorem{corollary}{Corollary}

\newtheorem{theorem}{Theorem}

\newtheorem{proposition}{Proposition}

\bibliographystyle{plain}
\newcommand{\var}{\text{var}}

\title{
Analysis of Push-type Epidemic Data Dissemination in Fully Connected Networks
}


\author{Mine \c{C}a\u{g}lar$^{1}$, 
Ali Devin Sezer$^{2,3}$
\\
\\
$^1$ Department of Mathematics\\
 Ko\c{c} University,\\
Istanbul, Turkey\\
\\
$^2$ Laboratoire Analyse et Probabilit\'es\\Universit\'e d'\'Evry Val d'Essonne  \\
91025 \'Evry Cedex, France \\  \vspace{6pt}
\\
$^3$ Institute of Applied Mathematics,\\ Middle East Technical University,\\ Ankara, Turkey}

\begin{document}
\maketitle
\begin{abstract}
Consider a fully connected network of nodes, some of which have
a piece of data to be disseminated to the whole network. We analyze
the following push-type 
epidemic algorithm: in each push round,
every 
node that has the data, i.e., every infected node,  randomly chooses $c \in {\mathbb Z}_+$
other nodes in the network and 
transmits, i.e., pushes, the
data to them.
We write this round as a random walk 
whose each step
corresponds to a random selection of one of the infected nodes; this gives
recursive formulas for the distribution and the moments of the number of newly
infected nodes in a push round. We use the formula for the distribution 
to compute the expected number of rounds so that a given percentage of the network is infected 
and continue a numerical
comparison of the push algorithm and the pull algorithm (where the susceptible nodes randomly choose
peers) initiated in an earlier work. We then derive the
fluid and diffusion limits of the random walk as the network size goes to $\infty$ and
deduce a number of properties of the push algorithm:
1) the number of newly infected nodes in a push round, 
and the number of random selections needed so that 
a given percent of the network is infected, are both asymptotically normal 
2) for large networks, starting with a nonzero proportion of infected nodes, a pull round 
infects slightly more nodes on average
3) the number of rounds until a given proportion $\lambda$
of the network is infected converges to a constant for almost all $\lambda \in (0,1)$. 
Numerical examples for theoretical results are provided.
\end{abstract}

{\bf Keywords:} peer to peer; pull; push; epidemics
; epidemic algorithm ; diffusion 
; fluid ; approximation ; asymptotic ; analysis ; data dissemination ; fully connected ; network ; graph

\section{Introduction}
{\em Epidemic algorithms} 
mimic spread of infectious diseases to disseminate data in large networks
 \cite{chierichetti2011rumor, mundinger2008optimal, eugster2004epidemic, karp2000randomized, qiu2004modeling, sanghavi2007gossiping, xu2012push}.
As is common in the literature,
let us call a node of a network {\em infected} if it holds 
the piece of data to be disseminated and {\em susceptible} otherwise.
Two of the main types of epidemic algorithms are {\em push}  and {\em pull}.
Both of these 
progress in discrete stages called {\em rounds}; in a push round,
each infected node randomly
selects $c>0$ nodes uniformly and without repetition among the rest of the nodes and uploads, i.e, pushes, the data
to these nodes; in a pull round, each susceptible node randomly selects $c$ nodes
and if any of these nodes is infected, the selecting node downloads, i.e., pulls, the data from the infected node to which
it has connected.
The 
parameter $c$ is called the {\em fanout.} 

One calls a network fully connected if each of its nodes can directly connect to any other node in the network.
If the network is represented as a graph, the network is fully connected if and only if its graph is
complete.
Analysis over fully connected networks is a natural first step in the study of algorithms on networks.
\cite{karp2000randomized,sanghavi2007gossiping,ganesh2005effect,birman1999bimodal} study epidemic algorithms on
fully connected networks and \cite{gibbens1990dynamic, birman1996computing,coffman2002network} study a range of other stochastic
algorithms
on them; see Section \ref{s:conclusion} for more on fully connectedness and for comments on other topologies. 
The aim of the present work is a thorough analysis of the push algorithm over  
fully connected networks; the following paragraphs explain the elements of this analysis.

A key random variable in epidemic algorithms is the number $Y$ of newly infected nodes after an epidemic round.
The paper
\cite{ozkasap2010analytical}
studies this random variable for fully connected networks
and observes that it is binomial for the 
pull\footnote{\cite{ozkasap2010analytical}, following 
\cite{birman1999bimodal}, reverses the roles of the words ``pull'' and 
``push''; what is called ``pull'' here is called ``push'' in these works.
We always use these words in the sense explained in the first paragraph.
One should keep this reversal in mind when comparing the results of the
present paper with those in \cite{ozkasap2010analytical, birman1999bimodal}.
}
round when conditioned on the number of infected nodes in the network right before the round begins.
For the distribution of $Y$ in the push round,
\cite{ozkasap2010analytical} assumes $c=1$ and derives the formula \eqref{e:transition} by counting all
digraphs which correspond to each realization of $Y$.
The direct computation of \eqref{e:transition}
requires high precision and lengthy arithmetic and this restricts its use to small networks $(n \le 200)$.
In 
Section \ref{s:rwrep} we take a different route and represent $Y$ for general $c$  using
a random walk $S$ with linear and state dependent dynamics.
Recall that within a push round each infected node randomly selects
$c$ peers and transmits its data to these peers.
Each step of the walk $S$ corresponds to one of these random selections; 
see \eqref{e:ilk}, \eqref{e:bern} and \eqref{X1}
for the exact dynamics.
The position $S_k$ of the walk at its $k^{th}$ step 
($k$ being the number of infected nodes before the round begins)
is our desired representation of $Y$.
Subsection \ref{ss:Smodel} explains how $S$ can be used as a model for the whole
push algorithm when it is allowed to take an unlimited number of steps. The next subsection
computes the first and second moments of $S$, which gives, in particular, those of $Y$.

The random walk $S$ is also a discrete time Markov chain and its dynamics therefore can be expressed as its one
step transition matrix $P$.
Thus, one can write (for all $n$, $k$ and $c$) the
distribution of $Y$ as the
first row of $P^k$. 
This gives
a fast algorithm to compute $Y$'s distribution for small $c$,
because $P$ is sparse when $c$ is small.
Section \ref{s:numerical} uses this algorithm to compute
for $n=500$, and $c \in \{1,7\}$,
the expected number of push 
rounds needed so that the proportion of the infected nodes in the network 
reaches $\lambda \in (0,1)$.
This expectation is simple to compute for the pull algorithm, because $Y$ of that algorithm is binomial.
Figures \ref{f:N50}
and \ref{f:DisstimeFanoutNEW} compare the aforementioned expectation for the
push and the pull algorithms.

Section \ref{s:dif} contains the main results of our analysis:  here
we compute  the diffusion and fluid limits of the
random walk $S$ and derive a number of 
properties of the push epidemic algorithm
from these limits.
For the asymptotic analysis to make sense
we set the initial number of infected nodes to $k_n$
such that $\lim_n k_n/n  = (1-\mu) \in (0,1).$
Theorem \ref{t:cthm} shows that as 
$n$ goes to $\infty$, the scaled
random walk $S/n$ behaves like $t\rightarrow \Gamma_t + \frac{1}{\sqrt{n}} X_t$
where $\Gamma$ is the deterministic process $t\rightarrow \mu(1-e^{-ct})$ and $X$ is a time discounted 
Wiener integral of a function of $\Gamma$
(see \eqref{e:formulaX}). 
The $t$ variable here is the continuous scaled time and $t=(1-\mu) = \lim_n k_n/n$ corresponds to
the
$ {k_n} \approx (1-\mu)n^{th}$ step of the random walk and hence to the end 
of the first push round.
This establishes that $\sqrt{n}(Y/n - \Gamma_{1-\mu})$ 
converges to a zero mean normal random variable
whose variance is given by the quadratic variation of $X$ (\eqref{e:varX} and \eqref{e:quadraticvar}). 
Subsection \ref{ss:compforlargen} uses Theorem \ref{t:cthm} 
to compare the pull and the push algorithms for large fully connected networks. 
In particular,
\eqref{e:comparemeans} says
that a pull round always infects slightly more nodes on average, for large
networks and starting with a nonzero proportion of infected nodes.
The difference disappears as $c$ increases.
When the network is initially half infected,
for $c > 15$ a single round of push or pull suffices to infect almost 
all of the nodes.

The last observation suggests that one study more carefully what happens in a single round.
The random walk representation and its limits allow exactly this. 
In subsection \ref{ss:asymptoticstn} we use the asymptotic limit of $S$ derived in Theorem
\ref{t:cthm}
to compute the asymptotics of the number $\tau^{n}_\lambda$ 
of random selections needed so
that the proportion of infected nodes in the network 
reaches $\lambda \in (0,1)$. 
Theorems \ref{t:asymptoticstn}
and \ref{t:asympalpha}
say that this quantity is also asymptotically normal and provide its mean and variance. 
$\tau^{n}_\lambda$ 
is not a function 
of the value of $S$ at a particular deterministic point in time but
of its whole path.
Thus, a stochastic process level analysis of $S$ is inevitable in the study
of $\tau^{n}_\lambda$.

Subsection \ref{ss:fluidlimit} uses $\Gamma$
to derive the fluid limit of a sequence of push rounds. 
Let us denote by $\nu_\lambda^{n}$ the (random) number of push
{\em rounds} needed so that the proportion of the network is above $\lambda \in (0,1).$
The final result of our analysis is Theorem \ref{t:nulambdalimit}, which says that
as the network size increases to $\infty$,
$\nu_\lambda^{n}$
converges
to a constant integer, if $\lambda$ is not one of the
deterministic levels derived in subsection \ref{ss:fluidlimit} that 
the fluid limit of the rounds go through.

Although we have not seen in the prior literature the diffusion
analysis of the random walk $S$, the proof of Theorem \ref{t:cthm}
is based on results and ideas in \cite{ethier1986t} and is 
relegated to the appendix.
We have not been able to find in the prior literature
analyses and proofs similar to the ones
we give in subsections \ref{ss:asymptoticstn} and \ref{ss:verycool} and
therefore the proofs in these subsections follow the statements of the
theorems.

The problems we treat and their solution have connections to a vast literature 
in communication systems, databases, applied probability, queueing theory, 
stochastic biological models among others.
The following review only touches a small subset of this literature
which directly relates to our analysis and of which we happen to be aware of.
\cite{karp2000randomized} studies a number of epidemic
algorithms
in a fully connected network. The one most related to the current
paper is an algorithm in which {\em all} nodes randomly connect to peers
rather than only
infected or only susceptible and all connections use both pull and push.
The paper uses Chernoff's bound to derive bounds 
on the tail probabilities on
the number of rounds this algorithm needs to spread a piece of data to 
the whole network with
high probability.
\cite{sanghavi2007gossiping} studies the effect of dividing the
data to be transferred into pieces on the performance of the
epidemic algorithms in a fully connected network
and derives asymptotic 
bounds on the tail probabilities of the 
number of rounds to disseminate the data to the whole network. 
As with \cite{karp2000randomized}, 
in \cite{sanghavi2007gossiping} the rounds are considered atomic
and during each round all nodes randomly connect to peers.
The main mathematical tool is again large
deviation bounds on independent and identically distributed (iid) sums, similar to Chernoff's bound. 
\cite{ganesh2005effect} studies a continuous time Markov process model similar to actual epidemic models
and uses a coupling argument to find bounds on the expected time to total dissemination in terms of
the largest eigenvalue of the adjacency matrix of the network graph and applies its results to a number
of graph structures, including complete graphs.
\cite{xu2012push} studies epidemic algorithms as a model for the spread of computer viruses; in this context
it makes sense to allow nodes to ``recover.''
Then a natural
quantity of interest is the time limit of the probability of each node being infected or susceptible.
\cite{xu2012push} proves the existence of these limits and derives
conditions 
under which convergence occurs exponentially fast.
\cite{akdere2006comparison}
compares pull, push and and a ``pull-push'' algorithm in the context of sensor networks using 
software that is used in actual sensors and a simulation environment which can run this software. This allows
its authors to study several aspects
of the performance of these algorithms in practical systems.

To the best of our knowledge the present paper is the first to use diffusion limits in the study
of epidemic algorithms in networks. However, in the biologic epidemics literature diffusion limits
are a basic tool; the classical reference on this subject is \cite[Chapter 11]{ethier1986t}; a recent
review is \cite{britton2010stochastic}. In all biological epidemic models
that we are aware of, the network graph is implicitly taken to be fully connected
by assuming that all members of the population 
somehow are able to interact with each other similar to chemicals
interacting in a liquid mixture.
There are two classes of epidemic models: continuous time and
discrete time \cite{daley2001epidemic}.
The continuous time assumption of the first class 
leads to a limit process 
(see \cite[Theorem 2.3, page 458]{ethier1986t})
which is different from the asymptotics of the push algorithm.
Among the works which assume a discrete time, the analysis
of \cite{tuckwell2007some} is closest to this work. The authors
of \cite{tuckwell2007some}
study a discrete time epidemic process which allows recovery. Infection
mechanism of the model corresponds to the pull algorithm (susceptibles
randomly choose peers). Besides allowing recovery the novelty of
\cite{tuckwell2007some} is that it allows a random fanout for each individual.
\cite[Section 3.2]{tuckwell2007some} derives a diffusion limit for this
model for constant fanout and no recovery.

Another branch of research related to epidemic algorithms is urn models in applied probability. 
The paper \cite{chierichetti2011rumor} uses this connection in finding asymptotic bounds on
the tail distribution of the the number of rounds until most nodes are infected, for the pull, push and a ``push-pull''
algorithms over graphs defined by the classical preferential attachment model \cite{bollobas2001degree}.
After our analysis we have noticed
that \cite{gani2004random} treats an urn model that corresponds to the push algorithm over fully connected networks.
In particular, 
it derives the asymptotic limit of the random variable $Y$
using a 
different set of tools from the ones used in this work  
(namely, probability generating functions (pgfs) and a result of \cite{warren1996peaks} which
characterizes pgfs arising from a Bernoulli sequence).  \cite{gani2004random} notes that a recursive characterization
of $Y$, similar to the one we give in Section \ref{s:rwrep} goes all the way back to \cite{woodbury1949probability}, which casts the problem in terms
of a sequence of trials of an event ``presuming that the probability
of the event on a given trial depends only on the number of the
previous successes.''

Further comments on our results and future research 
are in Section \ref{s:conclusion}.

\section{Random Walk Representation}\label{s:rwrep}
We will begin our analysis by considering a single push round on a network
with $n$ total nodes and $k$ infected nodes.
Let $Y(n,k)$ denote the number of the number of newly infected nodes after
the round is over.
The paper \cite{ozkasap2010analytical}, which only considers the case $c=1$, finds
the following formula for the distribution of
$Y(n,k)$
\begin{equation}
	\p(Y(n,k)=i)=
  \frac{ \binom{n-k}{i} i! \sum_{k_1 =i}^{k} \binom{k}{k_1}(k-1)^{k-k_1}
  {\bf s}(k_1,i)}{(n-1)^{k}},                        \label{e:transition}
\end{equation}
where ${\bf s}$ denotes the Stirling numbers of the second kind.
This formula comes from representing the result of a round as a graph and counting
those graphs that give $Y(n,k) = i.$
The use of \eqref{e:transition} raises two issues: 1) it
is valid only for $c=1$ and 2) the expressions that appear in it can be computed exactly
for only small values of $n$ (i.e., $n\le 200$) ,
see \cite{ozkasap2010analytical} for more on these.
In this section we derive a new dynamic representation of this round
as a random walk with state dependent increments, whose each step corresponds to a
random selection made by one of the infected nodes during the round.

Let us first consider the case $c=1$; the extension to $c>1$ will be straightforward.
The random walk representation of the push round begins with thinking that the nodes do their
random  selection of a peer from the rest of the network
one by one. 
The Bernoulli random variable
$X_1$ denotes the result of
the first selection  ($X_1=1$ if the selected node is susceptible, $0$ otherwise),
$X_2$ the result of the second selection,
and so on.
Now define
\begin{equation}  \label{e:ilk}
S_{l+1} = S_l + X_{l+1}         \qquad l=0,1,2,\ldots
\end{equation}
where the conditional distribution of $X_{l+1}$ given $S_l=i  \le n-k$ is
Bernoulli with success probability
\begin{equation} \label{e:bern}
\p(X_{l+1} = 1|\ S_l=i) = (n-k-i)/(n-1), 
\end{equation}
for $l=1,\ldots, k-1$ and $S_0=0$. 
$S_l$ is the number of newly infected nodes after 
the $l^{th}$ random selection;
$Y(n,k)$, then, is $S_{k}.$
The conditional distribution \eqref{e:bern} is written by noting that
the only way for the $l+1^{st}$ selection to increase the infected
count is by choosing a susceptible node that has not been touched
by the first $l$ random selections. We also note
$\mathbb{P}( X_{l+1} = i | S_0, S_1,...,S_l ) = \mathbb{P}(X_{l+1}= i | S_l)$ which
makes $S$ a Markov process and ensures that \eqref{e:bern} 
determines the entire distribution of $S$.

Subsection \ref{ss:Smodel} will explain
how $S$
can serve as a model of the whole push algorithm 
when it runs for an unlimited number of steps.

The dynamics \eqref{e:bern} imply that $S$ is
a Markov chain with one step transition matrix
\begin{equation}\label{e:P}
P_{i,i+j} =  {\mathbb P}( X_{l+1} = j | S_l = i).
\end{equation}
The probability distribution of $Y(n,k)$ (i.e., the position of the chain
at its $k^{th}$ step)
is the first row of $P^k$, i.e.,
 \begin{equation}  \label{e:katki}
\p(Y(n,k)=i)=P^k_{0,i}\qquad i=0,1,2,\ldots,
\min(k,n-k).
 \end{equation}
$P^k$ can be computed quickly for relatively large values
of $n$, because $P$ is sparse.

For $c>1$,
one only
generalizes the conditional distribution of $X_{l+1}$ given $S_l=i \le n-k$
from \eqref{e:bern} to
\begin{equation}\label{X1}
\p(X_{l+1} = j |\ S_l=i) =
\binom{k-1+i}{c-j}\binom{n-k-i}{j}\Big/ \binom{n-1}{c},
\end{equation}
which is a hypergeometric
distribution 
on 
$\{j: \max(0,c-(k+i-1)) \le j \le \min(c,n-(k+i))\}$.
With this generalization, the formulas \eqref{e:P} and \eqref{e:katki} 
continue to work for $c>1$.

\eqref{X1}, \eqref{e:P} and \eqref{e:katki} imply that $Y$ can
take values from $\max(0,c+1-k)$ to $\min(k,n-k)$ with positive probability.


\subsection{$S$ as a model of the whole push algorithm}
\label{ss:Smodel}
Now suppose that we would like to model a second round which follows the
first round also with a random walk. Let us temporarily call this
random walk $S^{(2)}$. 
\eqref{e:ilk} and \eqref{e:bern} continue to describe the dynamics
of $S^{(2)}$ if we only replace the $k$ in \eqref{e:bern} with $S_k + k$,
which is the number of infected nodes in the network
at the end of the first round. This and the Markov property of $S$
imply that we don't actually need
a second process $S^{(2)}$ to describe the second round and it suffices to
simply run the original process $S$ indefinitely; its first $I_0= k$
steps will model the random selections of the infected nodes in the
first round, its next
 $I_1 \doteq k+ S_k$ steps will model the random selections in 
the second round, the next $I_2 \doteq k + S_{I_0 + I_1} $ 
steps in the third round,
the next $I_3 \doteq k + S_{I_0+ I_1+ I_2} $ steps in the fourth round
and so on. 
The sequence $(I_0,I_1,I_2,..)$ itself represents the number of infected
nodes after the $0^{th}$ round, the $1^{st}$ round, the $2^{nd}$ round and so on.
Thus we see that the single random walk $S$, when
ran indefinitely, is another model for the entire push algorithm.
The key difference from the traditional model of a sequence
of push rounds is that $S$ takes the
random selections that occur in the rounds
as the atomic operation of the push algorithm and not
the rounds; the rounds are then expressed as recursive random increments of this walk as above.
We will use this observation repeatedly in subsections 
\ref{ss:asymptoticstn} and \ref{ss:verycool} when we want to use 
results about $S$ to get results on a sequence of push rounds and
hence on the entire push algorithm. In the rest of the paper
we will always assume that $S_l$ is defined for all $l$, following
the dynamics \eqref{e:ilk} and \eqref{e:bern} or \eqref{X1} (for $c\ge 1).$

\subsection{Expectation and second moment of $S$ and $Y$}
By taking the expectation of both sides of \eqref{e:ilk}
and of its square
and using \eqref{e:bern}
one can find linear
recursions for ${\mathbb E}[S_l]$ and ${\mathbb E}[S_l^2]$;
setting $l=k$ gives ${\mathbb E}[Y]$ and ${\mathbb E}[Y^2]$.
The results of this subsection will also be useful in the
fluid and diffusion limit analysis of Section \ref{s:dif}.
Let us start with fanout $c=1$.
Take the conditional expectation given $S_l$ of both sides 
of \eqref{e:ilk} to get
$\E [S_{l+1} | S_l ]=\E [S_l| S_l] + \E[X_{l+1}|S_l].$
The conditional distribution
\eqref{e:bern} of $X_{l+1}$ given $S_l$
implies that ${\mathbb E}[X_{l+1}|S_l] = (n-k-S_l)/(n-1)$.
Substituting this in the last display and taking now the ordinary
expectation of both sides give
\begin{equation} \label{e:ilkrec}
\gamma_{l+1} = a_1 \gamma_l + a_0
\end{equation}
where  $\gamma_l \doteq {\mathbb E}[S_{l}]$,
and
\begin{equation}\label{e:coefa}
 a_1 \doteq (n-2)/(n-1),~~~ a_0\doteq  (n-k)/(n-1).
\end{equation}
By definition we set $\gamma_0 \doteq 0.$ 
\eqref{e:ilkrec} is a linear recursion and its solution
is
\begin{equation}\label{e:gammal}
\gamma_l =  (n-k)
(1- a_1^l ).
\end{equation}
$Y$, the number of infected nodes at the end of the push round,
equals $S_k$ and therefore
\begin{equation}  \label{exp1}
{\mathbb E}[Y]= \E\, [S_k] =(n-k)(1- a_1^k).
\end{equation}

The second moment of $S_l$ is computed similarly. Let
$\alpha_l \doteq {\mathbb E}[S_l^2]$;
$\alpha_l = {\mathbb E}[ S_l^2 ] =
{\mathbb E}\left[ (S_{l-1} + X_l)^2\right]
= {\mathbb E}[ S_{l-1}^2] + 2{\mathbb E}[ S_{l-1}X_l] + {\mathbb
E}[X_l^2].$
The middle term can be written in terms of $\alpha_{l-1} $ and $\gamma_{l-1}$
as follows:
\begin{align*}
{\mathbb E}[S_{l-1} X_l] &={\mathbb E}[{\mathbb E}[S_{l-1} X_l |
S_{l-1}] ]
	 ={\mathbb E}[S_{l-1}{\mathbb E}[ X_l | S_{l-1}] ]\\
	 &={\mathbb E}\left[S_{l-1}\frac{n-k - S_{l-1}}{n-1}\right]= \frac{n-k}{n-1} \gamma_{l-1} - \frac{\alpha_{l-1}}{n-1},
	 \end{align*}
where we have again used the conditional distribution \eqref{e:bern}.
	 Then, we have the following recursion for $\alpha_l$:
	 \begin{align}
	\label{e:alprec}
 \alpha_l &= \alpha_{l-1} +2\left( \frac{n-k}{n-1}\, \gamma_{l-1} -
 \frac{\alpha_{l-1}}{n-1} \right) + \frac{n-k-\gamma_{l-1}}{n-1} \notag \\
	 &= b_2 \alpha_{l-1}
	 + b_1 \gamma_{l-1}
	 + b_0
	\end{align}
where 
\begin{equation}\label{e:coefb}
b_2 \doteq \frac{n-3}{n-1}, b_1 \doteq \frac{2(n-k) - 1}{n-1},
b_0 \doteq \frac{n-k}{n-1}.
\end{equation}
This and \eqref{e:gammal}
imply
\begin{equation}  \label{e:alphal}
\alpha_l = \left(1- b_2^l\right)b_0 + 
b_1 \sum_{i=0}^{l-1} \gamma_{i} b_2^{l-1-i}.
\end{equation}
The second moment of $Y$ is 
\begin{equation}\label{e:secmomY}
{\mathbb E}[Y^2] = {\mathbb E}[S_k^2] =\alpha_k.
\end{equation}
\eqref{e:gammal} implies
$\lim_l \gamma_l = (n-k)$. Hence, the last sum in \eqref{e:alphal} converges
to $(n-k) \sum_{i=0}^\infty b_2^i = (n-k)(n-1)/2$ and this implies
$\alpha_l \rightarrow (n-k)^2$.
Therefore, for $n$ and $k$ fixed, 
$\var(S_l) \rightarrow 0$ as $l\rightarrow \infty$
and $S_l \rightarrow (n-k)$, i.e., if
the random selections of the nodes continue indefinitely all nodes will eventually be infected almost surely.
The graph of the variance $\var(S_l)$ is shown in
Figure \ref{f:variance}. \ninseps{variance}{Graph of $\var(S_l)$ for
$n=100, k=20$}{0.4} 

The case $c>1$ works the same way except that one uses
the conditional distribution \eqref{X1} 
rather than \eqref{e:bern} in computing ${\mathbb E}[X_{l+1}^q|S_l]$, $q=1,2$; 
all of the equations \eqref{e:ilkrec}, \eqref{e:gammal}, \eqref{e:alprec}
and \eqref{e:alphal} remain as before except that one
generalizes the definitions of the coefficients $\{a_i\}$ \eqref{e:coefa}
and $\{b_i\}$ \eqref{e:coefb} to 
\begin{align}\label{e:coefc}
 a_0 &\doteq c(n-k)/(n-1),~~~ a_1 \doteq (n-1-c)/(n-1),\\
b_0 &\doteq \frac{c((n - (c+1)) + (n-k)(c- 1))(n-k)}{(n-2)(n-1)},\notag\\
b_1 &\doteq \frac{c  ( n - (c+1) )(2(n-k) -1 )}{(n-2)(n-1)},~~~
b_2 \doteq 1 - \frac{2c}{n-1} + \frac{c(c-1)}{(n-1)(n-2)}.\notag
\end{align}
Note that these reduce to \eqref{e:coefa}  and \eqref{e:coefb} for
$c=1.$
We summarize the formulas derived about the distribution of $Y(n,k)$ in Table \ref{table1}.
\begin{table}
\begin{tabular}{ !{\color{gray}\vline}c !{\color{gray}\vline} c !{\color{gray}\vline} c !{\color{gray}\vline} c !{\color{gray}\vline}}
\arrayrulecolor{gray}
\hline 
     &  Distribution &  Expectation &   Second moment   \\ 
\arrayrulecolor{gray}
\hline
 $c=1$ & \eqref{e:P},  \eqref{e:katki}  &  \eqref{exp1} & \eqref{e:secmomY}
\\ 
\arrayrulecolor{gray}
 \hline
change for 
$c\geq 1$ 
&\eqref{e:bern}$\rightarrow$ \eqref{X1} &
\eqref{e:coefa} $\rightarrow$ \eqref{e:coefc} &  \eqref{e:coefb} $\rightarrow$
\eqref{e:coefc} \\
\arrayrulecolor{gray}
\hline 
 \end{tabular}
  \centering 
\caption{Equation numbers for the distribution and moments of $Y(n,k)$}
  \label{table1}
\end{table}

\section{A numerical comparison of push and pull}\label{s:numerical}
Let us now use the results so far to numerically
compare the push and the pull algorithms. The observations made in this section
will also motivate the theoretical results of the next chapter.
As in subsection \ref{ss:Smodel}, let
$I_m$ denote the number of infected
nodes after the $m^{th}$ round. We have shown in that subsection how to write
$I_m$ in terms of $S$. The Markov property of $S$ implies that
one can also write the same sequence as
\begin{equation}\label{e:dynamicsI}
I_m = I_{m-1} + Y_m,
\end{equation}
where $Y_m$, conditioned on $I_{m-1}=j$,
is independent of $(I_0,I_1,...,I_{m-2})$
and has the same distribution as $Y(n,j)$.
One of the key questions about the process $(I_0,I_1,I_2,...)$,
and hence about the push algorithm
is this:
how many rounds is required so that
a given proportion of the network is infected? This is the quantity
that all of \cite{ozkasap2010analytical, karp2000randomized, sanghavi2007gossiping, chierichetti2011rumor} analyze. The answer to this question
is expressed as the following stopping time of $I$:
\begin{equation}\label{e:nunlambda}
\nu_\lambda^{n} \doteq \inf \{ m: I_m/n \ge \lambda \}, \lambda \in (0,1).
\end{equation}
We compute the weak limit of 
$\nu^{n}_\lambda$ as the network size goes to $\infty$ 
in subsection \ref{ss:verycool}.
In the numerical study of the present section, 
we compute
$N(\lambda,k) \doteq {\mathbb E}_k\left[\nu_\lambda^{n}\right]$,
for $n = 500$ and $c \in \{1,7\}$;
the $k$
in the subscript of the expectation operator denotes that we condition 
on $I_0 = k$, i.e.,
the infected number of nodes in the network before the first round is $k$. 
$N(1,k)$ is called the mean total dissemination time. 
Because $n$ is fixed (i.e., we are not taking any limits)
throughout this section,
there is no harm in assuming $\lambda \in
\{ j/n, 0 < j < n \}$, which is the set of all
possible proportions of
infected nodes for a finite network with $n$ nodes.
The dynamics
\eqref{e:dynamicsI} implies that for $\lambda >k/n$
\begin{equation}\label{e:recN}
N(\lambda ,k) = 1 +  N(\lambda ,k) \p(Y(n,k) = 0)
        +   \sum_{i =1}^{n \lambda -k-1}  N(\lambda ,k+i ) \p ( Y(n,k) = i).
\end{equation}
The $1$ on the right means that going from $k$ infected nodes
to $j = n\lambda$ infected nodes will take at least one round, the second term handles the case where
no infections occur in the first round and the sum handles the cases where
the first round infects at least $1$ node but less than the $j-k$ needed 
to get a total of $j$ infected nodes.
Furthermore
\begin{equation}\label{e:N0}
N(\lambda,k) = 0 
\end{equation}
when $\lambda \le k/n$ because if there are $k$ infected nodes initially
then obviously $j = \lambda n  \le k$ of them 
are also infected already and we need no rounds.
\eqref{e:N0}, \eqref{e:recN} and \eqref{e:katki} can be used to compute 
$N(j/n,k)$ for all $j$ and $k$.
Note that \eqref{e:recN} and \eqref{e:N0} are the same for the pull algorithm as well, the only change is in the
distribution of $Y(n,k)$, which is binomial for the pull. 
Figure \ref{f:N50} shows $\lambda \rightarrow N(\lambda,k)$ for the push and
the pull algorithms for $k=50$, $n=500$ and $c\in \{1,7\}$. 
These values of $k$ and $n$ correspond to $k/n = 50/500 = 0.1$ initial proportion of infected nodes.
\ninseps{N50}{Graphs of $\lambda \rightarrow N(\lambda,k)$ 
for the push and the pull algorithms; the dotted curves
are pull, the solid curves are push; $n=500$, $k=50$, 
$c\in \{1,7\}$, the top couple is $c=1$ and the bottom couple is $c=7$}{0.45}

Figure \ref{f:N50} suggests that, for both push and pull,
$\lambda \rightarrow N(\lambda,k)$ alternates
between phases of constancy and rapid growth and that this behavior gets more 
marked as $c$ increases. One of the goals of the next section is to explain this
behavior. For now, let us briefly comment that
both of these algorithms have deterministic fluid limits (derived for the push algorithm
in Theorem \ref{t:cthm} and in subsection \ref{ss:fluidlimit}) and as the network size grows
each round infects an almost deterministic proportion of the nodes.
This can be used to prove that, in the limit, $\nu^{n}_\lambda$ becomes almost deterministic and
as a function of $\lambda$ it becomes a step function, increasing only at the levels of infection
attained by the rounds of the fluid limit; the exact result on this is
Theorem \ref{t:nulambdalimit}, proved in subsection \ref{ss:verycool}. 
The more pronounced nature of the growth phases for greater values of $c$ will again be explained by Theorem \ref{t:cthm}
which implies that the deviations from the fluid limit has a lower variance as $c$ grows.

The $N$ of the pull algorithm in Figure \ref{f:N50}
lies on or below that of push. This suggests, for a large network
with a nonzero initial proportion of infected nodes, on average,
the pull reaches a given level in less or equal number of rounds than 
the push. 
However, note that the difference between the algorithms in Figure 
\ref{f:N50} 
is not that great and grows only as the network nears complete infection.
The theoretical result which
explains these observations is Proposition \ref{t:comppullpushmean} in subsection \ref{ss:compforlargen}, which compares
the expected number of infected nodes in a pull and a push round.


\cite[Section 6]{ozkasap2010analytical} uses the binomial distribution of $Y$ under the pull 
algorithm
to compute the dependence 
on the fanout $c$
of 
the mean total
dissemination time $N(1,1)$ 
starting with a single infected node for a network of $500$ nodes.
With \eqref{e:katki} we are able
to do the same also for the push algorithm. 
$N(1,1)$
as a function of $c$ for $n=500$ is given in Figure \ref{f:DisstimeFanoutNEW} for both algorithms.
Although $N(1,1)$ is larger
in the push algorithm for smaller fanout values, this value for 
both algorithm seem to converge for $c\geq 8$. Corollary \ref{c:compare} below partially explains
this phenomenon.
 

\ninseps{DisstimeFanoutNEW}
{Expected total dissemination time in rounds versus fanout for $n=500$, and $k=1$}
{0.5}

\section{Fluid and Diffusion Limits}\label{s:dif}
A great deal can be understood about the push algorithm by
computing the fluid and diffusion limits of $S$
as the network size $n$ goes to $\infty$.
To get meaningful limits, we allow the initial number of infected nodes $k$ 
to depend on $n$ in such a
way that
\begin{equation}\label{e:defmu}
0 < \mu = \lim_{n \rightarrow \infty} \frac{n-k_n}{n}< 1
\end{equation}
holds.
The limits we talk of here are known as 
{\em weak limits} in probability theory and is almost always shown
with the sign $\Rightarrow$, which we will also do below.
The quintessential weak convergence
result is the central limit theorem and ``diffusion approximations''
are central limit theorems for the entire sample paths of processes.
Two of the basic references on weak convergence are 
\cite{BillWeakConv,ethier1986t}.

To get the fluid and diffusion limits, we scale and center $S$
and time as follows:
\begin{equation}
X^n_t \doteq \label{scaled}
\sqrt{n } \left( \frac{S_{\lfloor nt\rfloor }}{n} - \Gamma^n_t \right),
~~~ \Gamma^n_t \doteq \frac{\gamma_{\lfloor nt \rfloor}}{n}.
 \end{equation}
The time variable $t$ of the scaled processes
$\Gamma^n$ and $X^n$ correspond to the $\lfloor n t\rfloor^{th}$ step of
$S$; $\Gamma^n$ is the expected proportion of newly infected nodes at
time $t$ and $X^n$ is $\sqrt{n}$ times the deviation of the actual 
proportion from the expected proportion, again at time $t$. 
This is the standard scaling in all diffusion analyses of Markovian
random walks with finite variance increments.
Just as in the central limit theorem,
the scaling by $\sqrt{n}$ puts the difference between the actual
and the expected proportions at a scale that ensures weak convergence
to a nontrivial (i.e., neither $0$ nor $\infty$) limit.

The process $(X^n,\Gamma^n)$ takes values in $D_{{\mathbb
R}^2}[0,\infty)$,
 the vector space of right continuous functions with left limits from
 $[0,\infty)$ to ${\mathbb R}^2$. 
Define
\begin{equation} \label{sigma}
 \Gamma_t \doteq  (\Gamma_0 - \mu)   e^{-ct} + \mu,~~
\sigma_t \doteq \sqrt{c (\mu - \Gamma_t)(1-(\mu-\Gamma_t))}.
\end{equation}
The next theorem gives the fluid and diffusion limits of $S$.
\begin{theorem} \label{t:cthm} Let $(X^n,\Gamma^n)$ 
be defined as in
\eqref{scaled} and $\Gamma$ as in \eqref{sigma}. Let $X$ be given by
\begin{align}\label{e:formulaX}
X_t &\doteq  X_0 e^{-ct}  + e^{-ct} \int_0^t e^{cs}
\sigma_s dW_s, 
\end{align}
where $c\in \mathbb{Z}_+$ and $\sigma$ is as in \eqref{sigma}.
Then
$(X^n,\Gamma^n) \Rightarrow (X,\Gamma).$
\end{theorem}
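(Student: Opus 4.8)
The plan is to follow the standard recipe for diffusion approximations of Markovian random walks as developed in \cite{ethier1986t}: decompose the scaled, centered walk $X^n$ into a predictable drift and a martingale, identify the limiting drift (which turns out to be the linear feedback $-c\int_0^\cdot X_s\,ds$) and the limiting quadratic variation $\int_0^\cdot\sigma_s^2\,ds$, check that the jumps vanish, invoke the functional martingale central limit theorem, and finally pass to the limit in the resulting linear equation by the continuous mapping theorem.

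\emph{Fluid part.} Since $\Gamma^n_t=\gamma_{\lfloor nt\rfloor}/n$ is deterministic, \eqref{e:gammal} together with the $c\ge 1$ coefficients \eqref{e:coefc} gives $\Gamma^n_t=\tfrac{n-k_n}{n}\bigl(1-a_1^{\lfloor nt\rfloor}\bigr)$ with $a_1=(n-1-c)/(n-1)$; by \eqref{e:defmu} and $a_1^{\lfloor nt\rfloor}\to e^{-ct}$ we get $\Gamma^n_t\to\mu(1-e^{-ct})=\Gamma_t$, uniformly on compacts, hence $\Gamma^n\to\Gamma$ in $D_{\R}[0,\infty)$. I would also record the functional law of large numbers $Z^n:=S_{\lfloor n\cdot\rfloor}/n\Rightarrow\Gamma$; this follows from the exact second moment formula \eqref{e:alphal} (which yields $\var(S_l)=O(n)$ uniformly over $l\le Tn$) plus a maximal inequality applied to the martingale part of $S$, and it is needed to identify the limiting quadratic variation.

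\emph{Martingale decomposition and its limits.} Work on the filtration $\mathcal F^n_t=\sigma(S_0,\dots,S_{\lfloor nt\rfloor})$. The process $X^n$ jumps only at the times $l/n$, each jump of size at most $2c/\sqrt n$, so its Lindeberg function vanishes. Writing $l=\lfloor nt\rfloor$ and using \eqref{e:ilk}, the mean $c(n-k_n-S_l)/(n-1)$ of the hypergeometric \eqref{X1}, and $\gamma_{l+1}-\gamma_l=\E[X_{l+1}]$, the conditional mean of the jump of $X^n$ at $l/n$ equals $-\tfrac{c}{n-1}X^n_{l/n}$, so the predictable part is $A^n_t=-\tfrac{c}{n-1}\sum_{l<\lfloor nt\rfloor}X^n_{l/n}=-c\int_0^t X^n_{s^-}\,ds+o(1)$. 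Recentering by a constant leaves conditional variances unchanged, so the martingale $M^n:=X^n-X^n_0-A^n$ has predictable quadratic variation $\langle M^n\rangle_t=\tfrac1n\sum_{l<\lfloor nt\rfloor}\var(X_{l+1}\mid S_l)$, where $\var(X_{l+1}\mid S_l=i)=c\,\tfrac{n-k_n-i}{n-1}\cdot\tfrac{k_n+i-1}{n-1}\cdot\tfrac{n-1-c}{n-2}$. Substituting $i=S_l$, using $k_n/n\to 1-\mu$ and the fluid limit $Z^n\Rightarrow\Gamma$, this Riemann-type sum converges to $\int_0^t c(\mu-\Gamma_s)\bigl(1-(\mu-\Gamma_s)\bigr)\,ds=\int_0^t\sigma_s^2\,ds$ with $\sigma$ as in \eqref{sigma}. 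By the functional martingale central limit theorem of \cite[Chapter 7]{ethier1986t}, $M^n\Rightarrow M$, the continuous Gaussian martingale $M_t=\int_0^t\sigma_s\,dW_s$, and $X^n_0\to X_0$ (here $S_0=0$ forces $X_0=0$; the general case with a concentrated random start is identical).

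\emph{Conclusion.} The identities above show that $X^n$ solves $X^n_t=X^n_0+M^n_t-c\int_0^t X^n_{s^-}\,ds+\varepsilon^n_t$ with $\varepsilon^n\Rightarrow 0$. The solution operator of $dx=-c\,x\,dt+dm$ is a continuous map of the driving path $m$ and the initial value in the Skorokhod topology, so the continuous mapping theorem yields $X^n\Rightarrow X$ with $X_t=X_0e^{-ct}+e^{-ct}\int_0^t e^{cs}\,dM_s=X_0e^{-ct}+e^{-ct}\int_0^t e^{cs}\sigma_s\,dW_s$, which is \eqref{e:formulaX}. Since $\Gamma^n\to\Gamma$ deterministically and both limits are continuous, $(X^n,\Gamma^n)\Rightarrow(X,\Gamma)$ in $D_{\R^2}[0,\infty)$. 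The step I expect to be the main obstacle is the bookkeeping that makes the martingale step rigorous: one needs a uniform a priori bound $\sup_{s\le T}\E[(X^n_s)^2]\le C_T$ (available, e.g., by Gronwall applied to the recursion for $\var(S_l)$ coming from \eqref{e:alphal}) in order to get tightness of $X^n$, to kill the $o(1)$ and $\varepsilon^n$ terms, and to justify replacing $i=S_l$ by $n\Gamma_s$ in the quadratic-variation sum uniformly rather than merely bounding it.
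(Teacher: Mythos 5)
Your argument is correct, but it follows a genuinely different route from the paper's. The paper treats $(X^n,\Gamma^n)$ as a single Markov chain, defines the one--step operator $T_n$ on $C_b(\R^2)$, Taylor--expands test functions to show that $A_n=n(T_n-I)$ converges to the generator $Af=\langle Df, c(-x_1,\mu-x_2)\rangle+\tfrac{c}{2}(\mu-x_2)(1-(\mu-x_2))f_{x_1x_1}$ of the limit pair, and then invokes the semigroup/core convergence theorems of \cite{ethier1986t} (Theorem 1.2, p.~31 and Theorem 2.6, p.~168) to conclude $(X^n,\Gamma^n)\Rightarrow(X,\Gamma)$ in one stroke. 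You instead use the semimartingale decomposition: predictable drift $-c\int_0^\cdot X^n_{s^-}ds$, martingale part with predictable quadratic variation converging to $\int_0^\cdot\sigma_s^2\,ds$, the functional martingale CLT, and the continuous mapping theorem applied to the variation-of-constants solution map of $dx=-cx\,dt+dm$. Your drift and variance computations agree with the paper's (your conditional jump mean $-\tfrac{c}{n-1}X^n_{l/n}$ and hypergeometric variance limit $c(\mu-\Gamma_s)(1-(\mu-\Gamma_s))$ are exactly the limits \eqref{e:Ey1c} and \eqref{e:varlimitx1c} obtained there). The trade-offs: the generator route gets joint convergence without a separate law of large numbers, because the fluid variable is carried along as the deterministic coordinate $x_2$ of the chain, at the price of verifying that $C_0^\infty$ is a core for $A$; your route needs the a priori LLN $S_{\lfloor n\cdot\rfloor}/n\Rightarrow\Gamma$ and the uniform second-moment bound as separate lemmas (both of which you correctly identify and which do follow from the explicit recursions \eqref{e:gammal} and \eqref{e:alphal} plus Doob/Gronwall), but in exchange it produces the explicit formula \eqref{e:formulaX} constructively rather than by verifying after the fact that $A$ generates the semigroup of $(X,\Gamma)$.
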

The proof, given in the appendix,
is based on representing Markov processes and their
weak convergence in terms of
the semigroups that the processes define and the generators of these semigroups
\cite{ethier1986t}.

Theorem \ref{t:cthm} implies that the random walk 
$\{S_l\}$
behaves more like\\
$\left\{ n\left( \Gamma_{l/n} + \frac{1}{\sqrt{n}} X_{l/n}\right)\right\}$
as $n$, the network size, increases.
The left part of
Figure \ref{f:samplepaths} shows $\{n\Gamma_{l/n}\}$ and a sample path
of $\{S_l\}$ and its right part
shows their difference. The random path of $S_l$ in this figure has
been simulated using \eqref{e:bern}.
Theorem \ref{t:cthm}
implies that 
the pathwise distribution of this difference gets closer 
to that of $\{\sqrt{n} X_{l/n}\}$  as $n$ increases.

Remember that $Y(n,k_n)$ can be represented as
the value $S_{k_n}$ of $S$ at step $k_n$, which corresponds to time $1-\mu$ in
the scaled continuous time. Theorem \ref{t:cthm} then implies
in particular that 
$n \Gamma_{1-\mu} + \sqrt{n} X_{1-\mu}$ is the normal approximation
of $Y(n,k_n)$. Let us write this as
\begin{equation}\label{e:approxY}
Y(n,k_n) \approx
n \Gamma_{1-\mu} + \sqrt{n} X_{1-\mu}.
\end{equation}
The random variable on the right is normally
distributed with mean $n \Gamma_{1-\mu}$. To compute
its variance we only need the second moment of $X_{1-\mu}$, which we derive
now.  It will be simpler to write everything in terms of
\begin{equation}\label{e:defXbar}
\bar{X}_t \doteq  \int_0^t e^{cs}  \sigma_s dW_s.
\end{equation}
The second moment of $X_{t}$ is 
\begin{equation}\label{e:varX}
\var(X_t) \doteq e^{-2ct} {\mathbb E}[\bar{X}_t^2]. 
\end{equation}
 $\bar{X}_t$ is a stochastic integral with respect to
a Brownian motion, and therefore $t\rightarrow \bar{X}_t$ is a continuous martingale whose quadratic
variation equals \cite[page 139]{karatzas1991brownian}
\begin{align}\label{e:quadraticvar}
\langle \bar{X}\rangle_t &= \int_0^t e^{2cs} 
\sigma_s^2 ds 
	 = \mu \left(e^{ct}-1 - \mu ct \right ),
\end{align}
which is a deterministic process. This implies
${\mathbb E}[\bar{X}_t^2] = \langle \bar{X} \rangle_t$ 
(see again \cite[page 137]{karatzas1991brownian}).
This and setting $t=(1-\mu)$ in \eqref{e:varX} gives
\begin{equation}\label{e:varY}
n \mu e^{-2c(1-\mu)} \left(e^{c(1-\mu)}-1 - \mu(1-\mu)c \right)
\end{equation}
as our approximation of the variance of $Y(n,k_n).$

\begin{figure}[h]
\begin{center}
\scalebox{0.3}{
\centerline{\epsfig{file=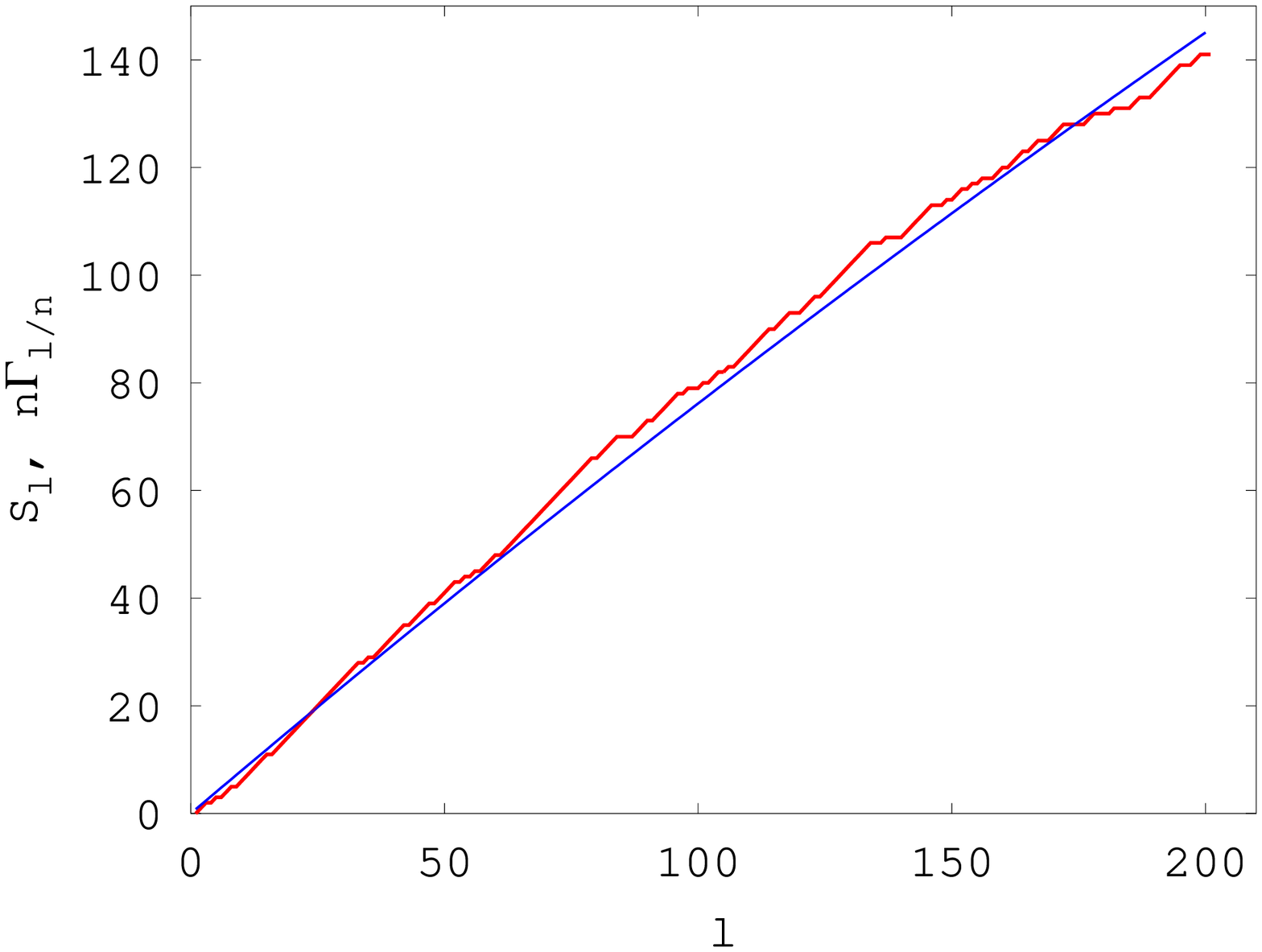}\hspace{2cm} \epsfig{file=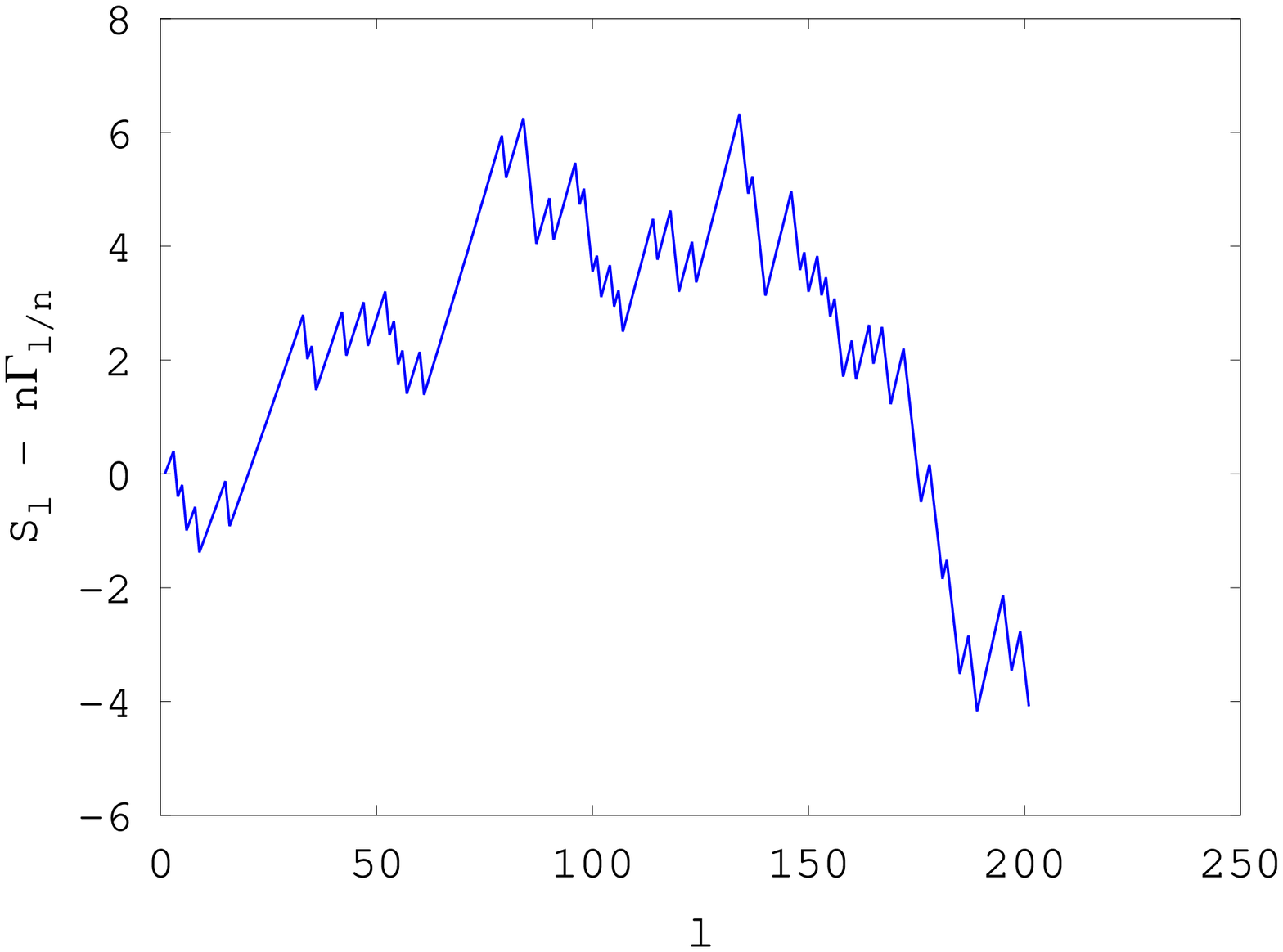}}}
\end{center}

\vspace{-0.8cm}
\caption{A sample path of $S_l$, $n\Gamma_{l/n}$ and their difference; $n=500$, $k_n = 200$ and $c=1$}
\label{f:samplepaths}
\end{figure}


\subsection{Comparison of push and pull for large networks}

\label{ss:compforlargen}
Theorem \ref{t:cthm} allows a simple comparison of the push and the pull algorithms
when $n$ is large.
In this subsection it will be easier to use a separate symbol to denote
the number of newly infected nodes in a pull round; let us use
$\tilde{Y}(n,k)$
for this purpose.
It is well known (see \cite{ozkasap2010analytical}) 
and simple to see that $\tilde{Y}(n,k)$ is Binomial$(1-p_n,n-k)$ with failure probability
$p_n = \binom{n-k-1}{c} / \binom{n-1}{c}$.
\begin{proposition}\label{t:comppullpushmean}
Let $\mu$ and $k_n$ be as in \eqref{e:defmu}. Then
${\mathbb E}[\tilde{Y}(n,k_n)] > {\mathbb E}[Y(n,k_n)]$ for $n$ large.
\end{proposition}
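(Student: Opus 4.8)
The plan is to compute both expectations explicitly and compare them, leveraging the fact that both quantities have closed forms. For the pull side, $\tilde{Y}(n,k_n)$ is Binomial$(1-p_n, n-k_n)$ with $p_n = \binom{n-k_n-1}{c}/\binom{n-1}{c}$, so ${\mathbb E}[\tilde{Y}(n,k_n)] = (n-k_n)(1-p_n)$. For the push side, equation \eqref{exp1} together with the generalized coefficient \eqref{e:coefc} gives ${\mathbb E}[Y(n,k_n)] = (n-k_n)(1-a_1^{k_n})$ with $a_1 = (n-1-c)/(n-1)$. Thus the inequality we must establish reduces, after dividing by $n-k_n>0$, to showing $1-p_n > 1-a_1^{k_n}$, i.e.
\begin{equation}\label{e:reducedineq}
a_1^{k_n} > p_n = \frac{\binom{n-k_n-1}{c}}{\binom{n-1}{c}}.
\end{equation}

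First I would rewrite $p_n$ as a product: $p_n = \prod_{j=0}^{c-1} \frac{n-k_n-1-j}{n-1-j}$, using the ratio of the two binomial coefficients and cancelling the $c!$. Each factor has the form $\frac{(n-1)-k_n-j}{(n-1)-j}$. Meanwhile $a_1^{k_n} = \left(\frac{n-1-c}{n-1}\right)^{k_n} = \left(1 - \frac{c}{n-1}\right)^{k_n}$. The natural route is to compare $\log p_n$ with $\log a_1^{k_n}$. Writing $m = n-1$, each factor of $p_n$ is $1 - \frac{k_n}{m-j}$, so $\log p_n = \sum_{j=0}^{c-1} \log\!\left(1 - \frac{k_n}{m-j}\right)$, while $\log a_1^{k_n} = k_n \log\!\left(1-\frac{c}{m}\right)$. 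The clean comparison is via the elementary bound $\log(1-x) \le -x$: this gives $\log p_n \le -\sum_{j=0}^{c-1}\frac{k_n}{m-j} \le -\frac{ck_n}{m-c+1}$, which is not immediately enough since we need an upper bound on $\log p_n$ that beats $k_n\log(1-c/m)$, and $\log(1-c/m) < -c/m$. So instead I would bound $\log p_n$ from above more carefully, or — more simply — compare the two products factor-by-factor after regrouping.

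The cleanest approach is probably the following factor-grouping argument: observe that $a_1^{k_n} = \prod_{i=0}^{k_n-1}\left(1-\frac{c}{m}\right)$ and $p_n = \prod_{j=0}^{c-1}\left(1 - \frac{k_n}{m-j}\right)$, and use the standard inequality between arithmetic-type averages, or directly the fact that for a complete graph the push round's fresh-infection probability for a single susceptible node (that a given susceptible node is hit by at least one of the $k_n$ pushers) is $1-a_1^{k_n}$ in the \emph{with-replacement-across-rounds} sense but the pull probability $1-p_n$ is exactly the chance a given susceptible hits at least one infected among $c$ draws without replacement; a direct probabilistic coupling shows $1-p_n \ge 1-a_1^{k_n}$ because pulling with $c$ draws from a pool containing $k_n$ infected is ``more efficient'' per potential contact. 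Concretely, I would show $\left(1-\frac{c}{m}\right)^{k_n} > \prod_{j=0}^{c-1}\left(1-\frac{k_n}{m-j}\right)$ by taking $c$-th roots or by induction on $c$: for $c=1$ both sides equal $1-k_n/m$ (wait — equality there), so one must check the strictness comes from $c\ge 2$ or from the strict version of the relevant convexity inequality; I would handle $c=1$ separately (where in fact ${\mathbb E}[Y]={\mathbb E}[\tilde Y]$ exactly, so the ``large $n$'' and strictness must be examined — this suggests the Proposition's strict inequality genuinely uses $c$ fixed with the asymptotic regime, or uses a second-order term).

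The main obstacle I anticipate is precisely pinning down where the strict inequality and the ``$n$ large'' hypothesis enter: the leading-order terms of ${\mathbb E}[Y]$ and ${\mathbb E}[\tilde Y]$ agree (both are $\approx (n-k_n)(1-e^{-c\mu})$ by Theorem \ref{t:cthm} and a direct expansion of $p_n$), so the comparison lives at the $O(1/n)$ correction. I would therefore expand both $1-a_1^{k_n}$ and $1-p_n$ to second order in $1/n$: $a_1^{k_n} = e^{k_n\log(1-c/m)} = e^{-ck_n/m}\bigl(1 - \tfrac{c^2 k_n}{2m^2} + \cdots\bigr)$, and similarly expand $p_n = \prod_j(1-k_n/(m-j))$, then compare the $1/n$-order coefficients; the sign of the difference of these coefficients, which should come out favoring pull, is the crux. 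Alternatively — and this may be the route the authors take — one shows the finite-$n$ inequality \eqref{e:reducedineq} outright via Jensen/convexity applied to $x\mapsto \log(1-x)$ over the points $\{k_n/(m-j): 0\le j\le c-1\}$ versus the constant $c/m$; the subtlety is that Jensen would need the average of $k_n/(m-j)$ compared to $c/m \cdot (\text{something})$, and getting the direction right is the delicate part. I would first try the clean finite-$n$ convexity argument, and fall back on the asymptotic $1/n$-expansion if the strictness forces the ``$n$ large'' qualifier.
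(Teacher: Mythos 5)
Your reduction to the inequality $a_1^{k_n} > p_n$ (with $a_1=(n-1-c)/(n-1)$, via \eqref{exp1} and \eqref{e:coefc}) is correct and would, if completed, give a stronger finite-$n$ statement than the paper actually proves. But the crux inequality is never established: you cycle through a log bound that you yourself note is not sharp enough, a coupling heuristic, a Jensen argument whose direction you cannot pin down, and a fallback $1/n$-expansion, without closing any of them. Worse, your sanity check at $c=1$ is wrong and derails the plan: there $a_1^{k_n}=\bigl(1-\tfrac{1}{n-1}\bigr)^{k_n}$ while $p_n=1-\tfrac{k_n}{n-1}$, and these are \emph{not} equal; Bernoulli's inequality gives $(1-x)^{k}>1-kx$ strictly for $k\ge 2$, so the comparison is already strict at leading order for $c=1$ and there is no need to retreat to second-order corrections. (Your parenthetical claim that ${\mathbb E}[Y]={\mathbb E}[\tilde Y]$ exactly when $c=1$ is likewise false.) For the record, the finite-$n$ inequality does close cleanly: by the symmetry $\binom{n-1-k_n}{c}\big/\binom{n-1}{c}=\binom{n-1-c}{k_n}\big/\binom{n-1}{k_n}$ one gets $p_n=\prod_{i=0}^{k_n-1}\bigl(1-\tfrac{c}{n-1-i}\bigr)$, and each factor is at most $1-\tfrac{c}{n-1}=a_1$, strictly for $i\ge1$, whence $p_n<a_1^{k_n}$ whenever $k_n\ge2$.

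The paper's own proof is entirely different and much shorter: it compares only the limits of the normalized means, $\lim_n{\mathbb E}[\tilde Y(n,k_n)]/n=\mu(1-\mu^c)$ for pull versus $\Gamma_{1-\mu}=\mu\bigl(1-e^{-c(1-\mu)}\bigr)$ for push (the latter from Theorem \ref{t:cthm}), and observes that $e^{-c(1-\mu)}>\mu^c$ for $\mu\in(0,1)$ because $e^{-x}>1-x$; the strict ordering of the limits then forces the strict ordering of the prelimit expectations for $n$ large, which is precisely where the ``$n$ large'' hypothesis enters. So the hypothesis is an artifact of the limit argument rather than a genuine obstruction, and your instinct that the comparison ``lives at the $O(1/n)$ correction'' is mistaken.
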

\begin{proof}
Theorem \ref{t:cthm} says that  the average proportion of newly infected
nodes after a push round converges to
$\Gamma_{1-\mu} = \mu(1-e^{-c(1-\mu)}) $.
The expected proportion of newly infected nodes in a pull round will be 
${\mathbb E}[\tilde{Y}(n,k_n)/n]= (1-p_n)(n-k_n)/n$, the mean of 
Binomial$(1-p_n,n-k_n)$ divided by $n$.
Since $c$
is fixed, $p_n \rightarrow \mu^c.$ This and $(n-k_n)/n \rightarrow \mu$ imply
$\lim_n {\mathbb E}[\tilde{Y}(n,k_n)/n] = \mu (1-\mu^c).$
The inequality
\begin{equation}\label{e:comparemeans}
  \mu (1-\mu^c) > \mu\left(1-e^{-c(1-\mu)}\right) 
\end{equation}
for all $\mu$ and $c$ implies the statement of the proposition.
\end{proof}
\begin{corollary}\label{c:compare}
For all $\mu$, if $c$ is taken large enough, a single round of pull or
push is enough to infect the whole network.
\end{corollary}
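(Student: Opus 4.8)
The plan is to read the corollary through the fluid limit of a single round and to reduce it to the elementary fact that the limiting infected proportion after one round tends to $1$ as the fanout grows. Start the first round with $k_n$ infected nodes satisfying \eqref{e:defmu}, so that by \eqref{e:dynamicsI} the proportion infected after the round is $I_1^n/n = (n-k_n)/n + Y(n,k_n)/n$. Theorem \ref{t:cthm}, in the form of the approximation \eqref{e:approxY}, gives $Y(n,k_n)/n \to \Gamma_{1-\mu}$ in probability as $n \to \infty$, with $\Gamma_{1-\mu} = \mu(1-e^{-c(1-\mu)})$ (take $\Gamma_0 = 0$ in \eqref{sigma}); the $\sqrt{n}\,X_{1-\mu}$ correction in \eqref{e:approxY} contributes only a term of order $1/\sqrt{n}$ to $Y(n,k_n)/n$ and hence disappears in the limit. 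Therefore $I_1^n/n \to 1 - \mu e^{-c(1-\mu)}$ in probability, for each fixed $c$.

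For pull the corresponding computation is already contained in the proof of Proposition \ref{t:comppullpushmean}: there $\tilde Y(n,k_n)/n \to \mu(1-\mu^c)$, so the proportion infected after one pull round converges to $(1-\mu) + \mu(1-\mu^c) = 1 - \mu^{c+1}$. Now both $1 - \mu e^{-c(1-\mu)}$ and $1 - \mu^{c+1}$ increase to $1$ as $c \to \infty$, since $0 < \mu < 1$ forces $\mu e^{-c(1-\mu)} \to 0$ and $\mu^{c+1} \to 0$. Given any $\varepsilon > 0$, choose $c$ so large that both of these limiting proportions exceed $1 - \varepsilon$; combined with the concentration built into Theorem \ref{t:cthm} (the fluctuation of $Y(n,k_n)/n$ about its fluid value is of order $1/\sqrt{n}$, with variance of $Y(n,k_n)$ given by \eqref{e:varY}), this yields $\p(I_1^n/n > 1-\varepsilon) \to 1$, and likewise for the pull analogue, which is the content of the corollary.

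The one point that needs care --- and the only genuine obstacle --- is the meaning of ``infect the whole network.'' For $c$ fixed, a residual proportion $\approx \mu e^{-c(1-\mu)}$ (push) or $\approx \mu^{c+1}$ (pull) of the nodes stays susceptible in the $n \to \infty$ limit, so the statement has to be read in the fluid sense: this residual proportion can be driven below any prescribed level by enlarging $c$. If instead one insists that literally no susceptible node be left after a single round, $c$ must be allowed to grow with $n$; a crude union bound shows that the expected number of still-susceptible nodes after one round is of order $n\mu^{c+1}$ for pull and $n\mu e^{-c(1-\mu)}$ for push, which is $o(1)$ once $c$ exceeds a constant multiple of $\log n$. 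I would therefore state and prove the corollary in the fluid sense as above, and record the logarithmic refinement as a remark.
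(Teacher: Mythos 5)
Your argument is correct and is essentially the paper's own proof: the paper simply observes that both sides of \eqref{e:comparemeans} converge to $\mu$ as $c\to\infty$, which is exactly your statement that the limiting infected proportions $1-\mu e^{-c(1-\mu)}$ and $1-\mu^{c+1}$ tend to $1$. Your closing caveat about the fluid-limit reading of ``infect the whole network'' (versus requiring $c$ of order $\log n$ for literally no survivors) is a fair and worthwhile clarification, but it does not change the substance of the argument.
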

\begin{proof}
Both sides of
\eqref{e:comparemeans} converge to $\mu$, the initial proportion of susceptible nodes
as $c\rightarrow \infty$.
\end{proof}

Several comments on these results and possible research directions that they suggest 
are as follows.
While the inequality \eqref{e:comparemeans} holds, the difference
between the two sides
is at most $0.06$ for $c=1$ and decreases
as $c$ increases (simple calculus shows the truth of these statements). 
Thus, the performance of these rounds are on average similar,
which explains the near performance of the pull and the push 
in the numerical example given in Figure \ref{f:N50}.

There is an important caveat to Proposition \ref{t:comppullpushmean}, which we
would like explain with an example.
As with all values of $\mu$, 
for $\mu$ close to $1$, i.e., when initially most of the nodes are susceptible,
a pull round infects 
on average
more nodes than push, as indeed claimed by Proposition
\ref{t:comppullpushmean}.
{\em But} 
a push round takes merely  $(1-\mu)n$ random selections whereas the pull takes $\mu n$;
for large $\mu$, a push round is a very small operation whereas a pull
round involves almost the whole network.
Thus, for a fairer comparison we think that it would be a good idea to take into
account the sizes of these operations. Such a comparison
can be undertaken in future work.

An interesting
comparison is when $\mu = 0.5$ and the initial number of the infected nodes equal the number of the susceptible ones.
In this case push and pull will involve the same number of random selections.
For $c=1$, \eqref{e:comparemeans} implies that
the pull round infects on average fifty percent of the susceptible nodes
whereas the push approach infects around forty percent.
For increasing values of $c$ the difference quickly disappears and for $c\ge 15$
a single round of either algorithm is enough to infect the whole network.

The foregoing discussion suggests the following heuristic:
in a network with few infected nodes, initially set $c$ to a relatively high
value (say between $10$ and $15$, if possible)
use push until half the network is infected and then switch
to pull and gradually decrease $c$. For smaller values of $c$, it will
be more advantageous to switch to pull earlier. Obviously, to turn these
ideas into a full fledged algorithm requires more work 
including a specification of how the nodes detect the infection
level in the network to do the switch. The design of such an algorithm and its
analysis can also be the subject of future work.

\subsection{First time to hit $n\Gamma_{1-\mu}$}\label{ss:asymptoticstn}
We have seen in the previous subsection that
for $\mu = 1/2$ and $c\ge 15$ one expects a single round to be enough
to infect the whole network.
In such cases, the number of rounds before the proportion of infected nodes
hits a certain level
becomes trivial (i.e. just $1$), and ``the number of random selections'' before the same event
becomes more useful and interesting.
\eqref{e:approxY} implies that, under \eqref{e:defmu},  the 
ratio of the number of newly
infected nodes to the number of nodes in the whole network
at the end of the first round has expectation approximately $\Gamma_{1-\mu}$. The number of random selections
needed to hit this level corresponds to the following stopping time of $S$:
$\tau_n \doteq \inf\{ l : S_l \ge n \Gamma_{1-\mu} \}$.
The goal of this subsection is to derive
approximations to the distribution of $\tau_n$ 
using the diffusion approximation of
Theorem
\ref{t:cthm}. 
The results we obtain will also be useful in subsection \ref{ss:verycool}
in finding the limits of $\nu_\lambda^{n}$, the number of {\em rounds} needed
before the infection level of the network is $\lambda.$ 
We would like to point out that $\tau_n$ cannot be studied
if one represents the result of a push round as a single random variable,
the ensuing analysis requires the use of the random walk representation.

Choose $t_n$ so that it solves
\begin{equation}\label{e:constrainttn}
\Gamma_{1-\mu} - \Gamma_{t_n} = C \sqrt\frac{\var(X_{1-\mu})}{n},
\end{equation}
where $C > 0$ is a large constant.
\eqref{e:varX} and \eqref{e:quadraticvar} imply
\begin{equation}\label{e:deftn}
t_n \doteq -\frac{1}{c} \log \left(\frac{1}{\mu}\left( \mu - \Gamma_{1-\mu} + \frac{C}{\sqrt{n} }\sqrt{\var(X_{1-\mu})}\right)
\right).
\end{equation}
Taylor expanding $\log$ in the last display
 around $1-\Gamma_{1-\mu}/\mu$
gives 
\begin{equation}\label{e:approxtn}
t_n = 
1-\mu -\frac{C \sqrt{\var(X_{1-\mu} ) }}{c(\mu - \Gamma_{1-\mu}) \sqrt{n}}
+O(1/n).
\end{equation}

\begin{proposition}\label{p:tn}
\begin{equation}\label{e:boundonTn}
\p( \tau_n \le n t_n ) \le  \sqrt{ \frac{2}{\pi} } \int_{C}^\infty e^\frac{-x^2}{2}dx,
\end{equation}
for $n$ large enough.
\end{proposition}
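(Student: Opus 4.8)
The plan is to exploit the monotonicity of $S$ to replace the first-passage event by an event about $S$ at a single time, to rewrite that event through the scaled process $X^n$ of \eqref{scaled}, and then to read off the Gaussian tail from the diffusion limit of Theorem~\ref{t:cthm} evaluated at the converging time $t_n$. First I would note that every increment $X_{l+1}$ in \eqref{e:ilk} and \eqref{X1} is nonnegative, so $l\mapsto S_l$ is nondecreasing and hence $\{\tau_n\le nt_n\}=\{S_{\lfloor nt_n\rfloor}\ge n\Gamma_{1-\mu}\}$. Since $S_{\lfloor nt\rfloor}=n\Gamma^n_t+\sqrt n\,X^n_t$ by \eqref{scaled}, this becomes
$$\p(\tau_n\le nt_n)=\p\bigl(X^n_{t_n}\ge a_n\bigr),\qquad a_n\doteq\sqrt n\,\bigl(\Gamma_{1-\mu}-\Gamma^n_{t_n}\bigr),$$
so it suffices to control the right-hand side.

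Next I would show that the deterministic threshold $a_n$ converges. Write $a_n=\sqrt n\,(\Gamma_{1-\mu}-\Gamma_{t_n})+\sqrt n\,(\Gamma_{t_n}-\Gamma^n_{t_n})$. By the defining relation \eqref{e:constrainttn} the first summand equals exactly $C\sqrt{\var(X_{1-\mu})}$. For the second summand, \eqref{e:gammal} gives $\Gamma^n_{t_n}=\tfrac{n-k_n}{n}\bigl(1-(1-\tfrac{c}{n-1})^{\lfloor nt_n\rfloor}\bigr)$, which differs from $\Gamma_{t_n}=\mu(1-e^{-ct_n})$ by $O(1/n)$ once one uses $t_n\to 1-\mu$ from \eqref{e:approxtn} together with the standing choice of $k_n$ (under which $n-k_n-(1-\mu)n$ stays bounded, so $\tfrac{n-k_n}{n}-\mu=O(1/n)$); hence $\sqrt n\,(\Gamma_{t_n}-\Gamma^n_{t_n})\to 0$ and $a_n\to C\sqrt{\var(X_{1-\mu})}=:a$.

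Then I would pass to the limit. By Theorem~\ref{t:cthm}, $(X^n,\Gamma^n)\Rightarrow(X,\Gamma)$; in particular $X^n\Rightarrow X$ in $D_{\mathbb R}[0,\infty)$, and the limit $X$ of \eqref{e:formulaX} has almost surely continuous paths. Combined with $t_n\to 1-\mu$, a Skorokhod-representation argument (realize $X^n\to X$ a.s.\ in the Skorokhod topology, upgrade to locally uniform convergence via continuity of $X$, so that $X^n_{t_n}\to X_{1-\mu}$ a.s.) gives $X^n_{t_n}\Rightarrow X_{1-\mu}$. Since $X_{1-\mu}\sim N(0,\var(X_{1-\mu}))$ with $\var(X_{1-\mu})>0$ by \eqref{e:varX} and \eqref{e:quadraticvar}, its law has no atoms, and feeding $a_n\to a$ into the portmanteau theorem (the half-lines $[a-\delta,\infty)$ are closed; then let $\delta\downarrow 0$) yields
$$\limsup_{n\to\infty}\p\bigl(X^n_{t_n}\ge a_n\bigr)\ \le\ \p\bigl(X_{1-\mu}\ge C\sqrt{\var(X_{1-\mu})}\bigr)=\p(Z\ge C),\qquad Z\sim N(0,1).$$
Because $\p(Z\ge C)=\tfrac{1}{2}\sqrt{2/\pi}\int_C^\infty e^{-x^2/2}\,dx$ is strictly smaller than $\sqrt{2/\pi}\int_C^\infty e^{-x^2/2}\,dx$, this $\limsup$ bound forces $\p(\tau_n\le nt_n)\le\sqrt{2/\pi}\int_C^\infty e^{-x^2/2}\,dx$ for all $n$ large enough, which is the assertion.

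The main obstacle I anticipate is the last step: justifying that one may read off the one-dimensional marginal of the diffusion limit at the \emph{moving} time $t_n$ rather than at a fixed time, and that this is compatible with the simultaneously moving (though deterministic) threshold $a_n\to a$. The continuity of the sample paths of $X$ and of the law of $X_{1-\mu}$ is exactly what makes both of these passages legitimate. Everything else is routine: the monotonicity reduction collapses the running maximum to an endpoint, the $O(1/n)$ estimate for $a_n$ is a Taylor expansion, and the factor-of-two gap between $\p(Z\ge C)$ and the stated bound is precisely the slack that absorbs the $O(1/\sqrt n)$ errors, so no sharpening is required.
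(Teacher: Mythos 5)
Your argument is correct, but it is not the route the paper takes, and it is in fact sharper. The paper never invokes the monotonicity of $S$: it introduces an auxiliary stopping time $\tau_n'$ for the \emph{centered} walk $(S_l-\gamma_l)/\sqrt n$ crossing the level $(n\Gamma_{1-\mu}-\gamma_{\lfloor nt_n\rfloor})/\sqrt n$, uses the fact that $\gamma_l$ is increasing to get the inclusion $\{\tau_n\le nt_n\}\subset\{\tau_n'\le nt_n\}$, passes to the limit at the level of the whole path via Theorem \ref{t:cthm}, and then evaluates the first-passage probability of the limiting (time-changed) Brownian motion by the reflection principle; the resulting running-maximum tail is exactly $2\,\p(Z\ge C)=\sqrt{2/\pi}\int_C^\infty e^{-x^2/2}\,dx$, which is where the constant in \eqref{e:boundonTn} comes from. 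You instead observe that the increments of $S$ are nonnegative, so the first-passage event collapses to the one-dimensional event $\{S_{\lfloor nt_n\rfloor}\ge n\Gamma_{1-\mu}\}$, and only the single marginal $X^n_{t_n}\Rightarrow X_{1-\mu}\sim N\bigl(0,\var(X_{1-\mu})\bigr)$ is needed; this yields the limsup bound $\p(Z\ge C)$, half the stated constant, and that strict gap is precisely what lets you turn an asymptotic inequality into ``for $n$ large enough'' --- a step the paper's own chain of limits only delivers up to an $\epsilon$. Your route also sidesteps the continuity question for the first-passage functional on Skorokhod space (which the paper glosses over), and it makes explicit the rate condition $\sqrt n\,\bigl((n-k_n)/n-\mu\bigr)\to 0$ that the paper also needs, silently, when it asserts $(n\Gamma_{1-\mu}-\gamma_{\lfloor nt_n\rfloor})/\sqrt n\to C\sqrt{\var(X_{1-\mu})}$; under the bare hypothesis \eqref{e:defmu} alone that convergence can fail, so flagging it is a genuine improvement rather than pedantry. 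What the paper's path-level argument buys in exchange is robustness: it does not depend on $S$ being nondecreasing, and it is the template that reappears in the proof of Theorem \ref{t:asymptoticstn}, where a one-marginal reduction is no longer available.
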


We refer the reader to the appendix for the proof.
Proposition \ref{p:tn} implies that, once we choose $C$ large enough,
with very high probability $\tau_n > nt_n$ and, by \eqref{e:approxtn},
$nt_n$
is only $O(\sqrt{n})$ steps away from $n(1-\mu) = k_n.$  
We will use
this in the proof of the next theorem to focus our attention on a small
neighborhood around $k_n$.
\begin{theorem}\label{t:asymptoticstn}
$(\tau_n - k_n )/\sqrt{n}  \Rightarrow N(0,v)$
where
$v\doteq
 \var(X_{1-\mu})/ (c ( \mu - \Gamma_{1-\mu} ))^2.$
\end{theorem}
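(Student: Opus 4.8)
The plan is to obtain Theorem \ref{t:asymptoticstn} from the diffusion limit of Theorem \ref{t:cthm} by a standard inverse-function (first-passage-time) argument, localized to an $O(\sqrt n)$-neighborhood of the step $k_n$. The key observation is that $\tau_n = \inf\{l : S_l \ge n\Gamma_{1-\mu}\}$, and $S_l \ge n\Gamma_{1-\mu}$ is equivalent to $X^n_{l/n} \ge \sqrt n (\Gamma_{1-\mu} - \Gamma^n_{l/n})$. So, setting $t = l/n$, crossing the level $n\Gamma_{1-\mu}$ happens exactly when the diffusion $X^n_t$ catches up with the (shrinking, deterministic) gap $\sqrt n(\Gamma_{1-\mu} - \Gamma^n_t)$. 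Near $t = 1-\mu$ this gap is essentially linear in $(1-\mu - t)$: by a Taylor expansion of $\Gamma$ (as already done in \eqref{e:approxtn} for $t_n$), one has $\Gamma_{1-\mu} - \Gamma_t = c(\mu - \Gamma_{1-\mu})(1-\mu - t) + O((1-\mu-t)^2)$. Hence on the scale $l = k_n + \sqrt n\, u$ — i.e. $t = (1-\mu) + u/\sqrt n$ — the gap $\sqrt n(\Gamma_{1-\mu} - \Gamma^n_t)$ converges to $-c(\mu - \Gamma_{1-\mu})\, u$, a deterministic line through the origin with slope $-c(\mu-\Gamma_{1-\mu}) < 0$.

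The steps I would carry out, in order: (1) By Theorem \ref{t:cthm}, $X^n_{1-\mu} \Rightarrow X_{1-\mu} \sim N(0, \var(X_{1-\mu}))$, using the second-moment computation \eqref{e:varX}--\eqref{e:quadraticvar} and the fact that $X_{1-\mu}$ is Gaussian (a Wiener integral of a deterministic integrand). (2) Write $\tau_n = k_n + \sqrt n\, \xi_n$ and show $\xi_n \Rightarrow X_{1-\mu}/(c(\mu - \Gamma_{1-\mu}))$; this gives exactly $v = \var(X_{1-\mu})/(c(\mu-\Gamma_{1-\mu}))^2$. For the upper tail of $\xi_n$ one uses Proposition \ref{p:tn}: for any fixed $C>0$, $\{\tau_n \le nt_n\}$ has probability at most $\sqrt{2/\pi}\int_C^\infty e^{-x^2/2}dx$, and since $nt_n = k_n - \frac{C\sqrt{\var(X_{1-\mu})}}{c(\mu-\Gamma_{1-\mu})}\sqrt n + O(1/n)$ by \eqref{e:approxtn}, this controls $\p(\xi_n \le -C'\,)$ for the appropriate $C'$, i.e. it keeps $\xi_n$ tight and prevents early crossing. (3) For the matching lower-crossing bound, one runs the same kind of estimate at a time $t'_n$ slightly \emph{after} $1-\mu$ (so that $\Gamma_{1-\mu} - \Gamma^n_{t'_n} < 0$ by an amount of order $\sqrt n^{-1}$), and uses continuity of the limit process $X$ together with $X^n \Rightarrow X$ to conclude that with high probability $S$ has crossed $n\Gamma_{1-\mu}$ by step $nt'_n$. (4) Combine (2) and (3) via the usual argument that a stopping time is squeezed between two deterministic times whose separation is $o(\sqrt n)$ on the relevant scale, and identify the limit using the continuous-mapping / Skorokhod-representation route applied to the joint convergence of $(X^n, \Gamma^n)$.

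I expect the main obstacle to be step (3)–(4): turning the process-level convergence $X^n \Rightarrow X$ into a clean statement about the hitting \emph{time} $\tau_n$. The subtlety is that first-passage functionals are not continuous on $D_{\mathbb R^2}[0,\infty)$ in general — one needs the limiting crossing to be "clean," i.e. the line $u \mapsto -c(\mu-\Gamma_{1-\mu})u$ and the path $u \mapsto X_{1-\mu+u/\sqrt n}$ (which, on this scale and after centering, degenerates to the constant $X_{1-\mu}$) must cross transversally and $X$ must not hover at the level. Because the slope $c(\mu-\Gamma_{1-\mu})$ is strictly positive and the relevant limiting object is a genuine non-degenerate Gaussian variable crossing a line with nonzero slope, this transversality holds almost surely, so the first-passage map is a.s. continuous at the limit and the continuous mapping theorem applies; but making this rigorous — in particular handling the rescaling of time by $\sqrt n$ inside the window, which makes the centered process locally flat — is the delicate part and is presumably where the appendix proof does its real work, likely again leaning on the generator/semigroup machinery of \cite{ethier1986t} and on Proposition \ref{p:tn} to confine everything to a $k_n + O(\sqrt n)$ window.
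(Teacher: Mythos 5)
Your plan is essentially the paper's proof: it too uses Proposition \ref{p:tn} to confine $\tau_n$ to a $k_n+O(\sqrt n)$ window, rescales time by $\sqrt n$ inside that window so that the problem degenerates to a transversal crossing between a frozen Gaussian level (with variance $\var(X_{1-\mu})$) and a deterministic line of slope $c(\mu-\Gamma_{1-\mu})$, and reads off the hitting time as a linear function of that Gaussian. The only cosmetic differences are that the paper centers $S$ at the fixed level $n\Gamma_{1-\mu}$ and recomputes the generator of the rescaled process $\hat X^n$ to identify the limiting line (rather than invoking local constancy of $X^n$ plus continuous mapping), and your limit $X_{1-\mu}/(c(\mu-\Gamma_{1-\mu}))$ should carry a minus sign, which is immaterial by symmetry.
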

\begin{proof}
Fix a finite interval $(a,b)$; our goal is to show
\begin{equation}\label{e:thm2toshow}
\lim_{n} \p\left( \frac{\tau_n - k_n} {\sqrt{n}} \in (a,b) \right) =
\frac{1}{\sqrt{2\pi v}} \int_a^b e^{-x^2/2v} dx.
\end{equation}
Choose $C$ in \eqref{e:constrainttn} so that
\[
C > \frac{c (\mu - \Gamma_{1-\mu} )}{\sqrt{\var(X_{1-\mu})}}\max(|a|,|b|)
\]
and define
$
{\bf t}_n \doteq  k_n/n - t_n.
$
Partition the event $E\doteq  \{ (\tau_n - n(1-\mu) )/\sqrt{n} \in (a,b)  \}$
as
$
( E \cap \{ \tau_n <  nt_n \}  ) \cup ( E \cap \{ \tau_n \ge nt_n \}).
$
Proposition \ref{p:tn} implies that, by increasing $C$, if necessary,
the probability of the
first of these sets can be made arbitrarily small. Furthermore,
on the set $E$ the greatest value
that $\tau_n$ can take is
$k_n + b \sqrt{n}$; by the choice of $C$ this is bounded above by
$k_n + n {\bf t}_n.$ 
These imply that
we can replace $\tau_n = \inf \{l: S_l \ge n\Gamma_{1-\mu}\}$ 
in \eqref{e:thm2toshow} with
$\tau_n' \doteq \inf \{ l \in I_n: S_l \ge n\Gamma_{1-\mu}\}$,
where $I_n \doteq k_n + n {\bf t}_n( -1,1)$. 
Thus in the rest of this argument we will prove
\begin{equation}\label{e:Tn'scaled}
\lim_{n} \p\left( \frac{\tau_n' - k_n} {\sqrt{n}} \in (a,b) \right)
=
\frac{1}{\sqrt{2\pi v}} \int_a^b e^{-x^2/2v} dx.
\end{equation}
For this,
it is enough to study the asymptotics of the
dynamics of $S$ in the interval $I_n$. 
To do so, define the scaled process
\[
\hat{X}^n_{\hat{t}} \doteq \frac{1}{\sqrt{n}} \left( 
S_{\lfloor k_n + \hat{t}\sqrt{n}\rfloor} -n\Gamma_{1-\mu}\right).
\]
The scaled time $\hat{\tau}_n \doteq \frac{\tau_n' - k_n}{\sqrt{n}}$ of 
\eqref{e:Tn'scaled}
is the first time
the process $\hat{X}^n$ hits $0$. Thus to find its limit
distribution it is enough to compute the weak limit of $\hat{X}^{n}$,
which we will now do.

The time interval $J_n\doteq [-\theta_n,\theta_n] =  \sqrt{n}{\bf t}_n[-1,1]$ 
for the process
$\hat{X}^{n}$ corresponds exactly to the time interval $I_n$ for $S$
and therefore, we will be studying $\hat{X}^n$ on $J_n.$ 
Note that $t=0$ is the middle of $J_n$ and corresponds to time $k_n$ of $S$,
which is the last step of the first push round. The end of $J_n$ is
the time point $\theta_n = \sqrt{n} {\bf t}_n$. $k_n/n \rightarrow
(1-\mu)$ and \eqref{e:approxtn} imply 
\[
\theta_n 
\rightarrow \theta \doteq 
\frac{ C \sqrt{\var(X_{1-\mu})}}{c(\mu-\Gamma_{1-\mu})}.\]
$J_n = [-\theta_n,\theta_n]$ is symmetric around $0$ and its starting point $-\theta_n$
converges
to $-\theta.$ Then the limit process $\hat{X}$ 
of  $\hat{X}^{n}$ will be running
on the interval $[-\theta,\theta]$. 
Note that the initial point of $\hat{X}^n$ is $\hat{X}^{n}_{-\theta_n}=
\frac{1}{\sqrt{n}} (S_{t_n}-n\Gamma_{1-\mu})$. Theorem \ref{t:cthm},
$t_n/n \rightarrow (1-\mu)$, 
and \eqref{e:constrainttn} imply that this random variable converges
weakly to a normal random variable with mean $-C\sqrt{\var(X_{1-\mu})} < 0$
and variance $\var(X_{1-\mu})$. Hence, this is the distribution of the
limit
$\hat{X}(-\theta).$

To compute the dynamics of $\hat{X}$ one proceeds parallel to the proof
of Theorem \ref{t:cthm}.
Fix $(\hat{x},\hat{t}) \in {\mathbb R} \times [-\theta,\theta]$
and define
\[
\hat{T}_n \doteq 
{\mathbb E}_{(\hat{x},\hat{t})}
\left[f\left(\hat{X}^{n}_{\hat{t}+ 1/\sqrt{n}}\right)
\right],
\hat{A}_n  \doteq \sqrt{n} (\hat{T}_n - I),
\]
where $f$ is a smooth function on ${\mathbb R}$ with compact support.
The subscript $(\hat{x},\hat{t})$ 
of the expectation means that we are conditioning
of $\hat{X}^{n}_t = \hat{x}.$ It remains to compute 
$\hat{A}_n$. This computation is parallel to the
arguments given in the proof of Theorem \ref{t:cthm} with one important
difference: now time is scaled by $1/\sqrt{n}$ rather than
$1/n$. Thus, we omit the details and directly write down the limit:
$\lim_{n\rightarrow \infty} A_n f = f'(\hat{x}) c(\mu - \Gamma_{1-\mu} ).$
The right side of the last display is the generator
of the process
\begin{equation}\label{e:dynamicsXhat}
 \hat{X}_{\hat{t}} \doteq \hat{X}_{-\theta} + c(\mu - \Gamma_{1-\mu})(\hat{t} +\theta)
\end{equation}
whose randomness is completely determined by its initial position $\hat{X}_{-\theta}$.
Exactly the same line of arguments as in the proof of Theorem \ref{t:cthm}
now
imply $\hat{X}^{n} \Rightarrow \hat{X}.$
We are interested in the limit of $\p( \hat{\tau}_n \in (a,b) )$,
the probability that $\hat{X}^{n}$ hits $0$ between time points $a$
and $b$. The weak limit we have just 
established implies that this limit equals $\p( \hat{\tau} \in (a,b))$
where $\hat{\tau}$ is the first time when the limit process $\hat{X}$ hits
$0$. \eqref{e:dynamicsXhat} implies that it will take $\hat{X}$
$
\frac{-\hat{X}_{-\theta}}{c(\mu - \Gamma_{1-\mu})}
$
unit of time to hit $0$. We subtract from this $\theta$ to convert it
to the time unit of the limit time interval $[-\theta,\theta]$:
$\tau' = 
-\hat{X}_{-\theta} / c(\mu - \Gamma_{1-\mu}) - \theta.$
But this is a random variable with mean $0$ and variance $v$. Hence we have
\eqref{e:Tn'scaled}.

\end{proof}
Here is a numerical example for Theorem \ref{t:asymptoticstn}.
Suppose we have a network with $n=5000$ nodes of which $k=200$ are initially 
infected, i.e., $\mu = 0.96$. Suppose that fanout is $c=5$.
For this network, the $v$ that appears in Theorem \ref{t:asymptoticstn} is 
$v \approx 0.0012$.
We know from Theorem \ref{t:cthm} that on average at the end
of the first push round the total number of infected nodes will be
$ 200 + n \Gamma_{1-\mu} \approx 1070$. Now Theorem \ref{t:asymptoticstn}
says that the first push round will attain this 
infection level with probability approximately equal to $1/2$:
and if it does, 
this will almost certainly happen in the last $4\sqrt{5000v } \approx 10 $
steps of the push round (which lasts a total of $k=200$ steps); this
network hitting $1070$ infected nodes before the last $10$ random selections 
is as likely as a normal random variable
being $4$ standard deviations below its mean.
The same theorem implies that with probability $1/2$ 
this level will be attained approximately
in the first $10$ steps of the next round. 

For $ 0 < \lambda < \mu$, define
$\tau^{n}_\lambda \doteq \inf \{l: S_l \ge n \lambda \}.$
$\tau^{n}_\lambda$ is the number of random selections 
before the proportion of newly 
infected nodes has reached $\lambda$; in terms of the random walk $S$,
it is the first time $S$ reaches the level $n\lambda.$

Define
\begin{equation}\label{e:bartaudef}
\bar{\tau}_\lambda(\mu) \doteq 
 -\frac{1}{c} \log\left( 1-  \frac{\lambda}{\mu}\right)
\end{equation}
$\bar{\tau}$, as a function of $\lambda$, is the inverse function
of the fluid limit $\Gamma_t(\mu) = \mu ( 1 -e^{-ct}) $ with respect to $t$.
Dynamically, $\bar{\tau}_\lambda$ is the first time $\Gamma$ hits the level
$\lambda.$ Because $\Gamma$ is deterministic, so is this hitting time.
A quick examination of the proof of Theorem \ref{t:asymptoticstn} reveals
that if one replaces $1-\mu$ with $\bar{\tau}_\lambda$ 
and $\Gamma_{1-\mu}$ with $\lambda$,
the proof continues to work exactly as is.
This gives us the following generalization of
Theorem \ref{t:asymptoticstn}:
\begin{theorem}\label{t:asympalpha}
Let
$\tau^{n}_\lambda$ and $\bar{\tau}_\lambda$ be as above. 
$(\tau^{n}_\lambda - n\bar\tau_\lambda )/\sqrt{n}  \Rightarrow N(0,v) $
with
$v = \var(X_{\bar{\tau}_\lambda})/ (c(\mu - \lambda))^2.$
\end{theorem}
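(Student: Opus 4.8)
The plan is to adapt the proof of Theorem \ref{t:asymptoticstn} essentially verbatim, replacing the role of the end-of-round time $1-\mu$ by the generic hitting time $\bar\tau_\lambda$ and the target level $\Gamma_{1-\mu}$ by $\lambda$. First I would observe that $\bar\tau_\lambda(\mu)$ as defined in \eqref{e:bartaudef} is precisely the unique $t$ with $\Gamma_t = \lambda$, i.e.\ $\mu(1-e^{-c\bar\tau_\lambda}) = \lambda$; this is the analogue of the identity $\Gamma_{1-\mu} = \mu(1-e^{-c(1-\mu)})$ used implicitly before. Since $\Gamma$ is strictly increasing, for $n$ large the random walk $S$ reaches level $n\lambda$ at a step very close to $n\bar\tau_\lambda$, and all of the scaling around that step goes through unchanged.

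Next I would restate the auxiliary choice of $t_n$ and the tail bound. One picks $t_n$ solving $\lambda - \Gamma_{t_n} = C\sqrt{\var(X_{\bar\tau_\lambda})/n}$ for a large constant $C$; Taylor expanding $\log$ exactly as in \eqref{e:deftn}--\eqref{e:approxtn} yields
\[
t_n = \bar\tau_\lambda - \frac{C\sqrt{\var(X_{\bar\tau_\lambda})}}{c(\mu-\lambda)\sqrt{n}} + O(1/n),
\]
where the factor $\mu - \Gamma_{\bar\tau_\lambda} = \mu - \lambda$ appears because $\Gamma_t = \mu - (\mu-\Gamma_0)e^{-ct}$ so $\frac{d}{dt}\Gamma_t = c(\mu-\Gamma_t)$. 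The analogue of Proposition \ref{p:tn} — that $\p(\tau^n_\lambda \le n t_n)$ is bounded by the Gaussian tail $\sqrt{2/\pi}\int_C^\infty e^{-x^2/2}dx$ — follows by the identical argument, since it only used the diffusion approximation of Theorem \ref{t:cthm} near the relevant step and the strict monotonicity of $\Gamma$.

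Then I would carry out the same localization-and-rescaling step: partition $\{(\tau^n_\lambda - n\bar\tau_\lambda)/\sqrt n \in (a,b)\}$ into the part where $\tau^n_\lambda < nt_n$ (negligible by the tail bound, after enlarging $C$) and the part confined to a window $I_n = n\bar\tau_\lambda + O(\sqrt n)$ around the target step. On that window define $\hat X^n_{\hat t} = \frac1{\sqrt n}(S_{\lfloor n\bar\tau_\lambda + \hat t\sqrt n\rfloor} - n\lambda)$; its initial value at $\hat t = -\theta_n$ converges weakly to $N(-C\sqrt{\var(X_{\bar\tau_\lambda})},\var(X_{\bar\tau_\lambda}))$ by Theorem \ref{t:cthm} and \eqref{e:constrainttn}, and the same generator computation (time scaled by $1/\sqrt n$, so the martingale part vanishes in the limit) gives the deterministic drift dynamics $\hat X_{\hat t} = \hat X_{-\theta} + c(\mu-\lambda)(\hat t + \theta)$. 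Its first hitting time of $0$ is $-\hat X_{-\theta}/(c(\mu-\lambda)) - \theta$, a normal variable with mean $0$ and variance $\var(X_{\bar\tau_\lambda})/(c(\mu-\lambda))^2 = v$, which is exactly \eqref{e:Tn'scaled} with the substituted parameters. The one place where genuine (if minor) care is needed — and the part I'd flag as the main obstacle — is checking that the generator computation for $\hat X^n$ really does only depend on the local behavior of the increments \eqref{X1} near step $n\bar\tau_\lambda$, uniformly over the shrinking window $J_n$, and that the drift coefficient there is $c(\mu-\Gamma_t)$ evaluated at $t=\bar\tau_\lambda$; granting the parallel to Theorem \ref{t:cthm} as the excerpt invites, this is routine, so the theorem follows.
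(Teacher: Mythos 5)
Your proposal is correct and follows exactly the route the paper takes: the paper's entire proof of Theorem \ref{t:asympalpha} is the one-sentence remark preceding it, namely that replacing $1-\mu$ by $\bar{\tau}_\lambda$ and $\Gamma_{1-\mu}$ by $\lambda$ in the proof of Theorem \ref{t:asymptoticstn} yields the result verbatim. You have simply written out the substitutions (including the correct identification of the drift $c(\mu-\Gamma_{\bar{\tau}_\lambda})=c(\mu-\lambda)$) in more detail than the paper does.
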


\subsection{Fluid limit of the whole push algorithm}\label{ss:fluidlimit}
As suggested in the beginning of this section,
instead of the number of newly infected nodes, one can keep
track of their proportion in the network. 
This amounts to dividing $S$ by $n$. 
Theorem \ref{t:cthm} gives us the following approximation for 
the proportion process:
$S_l/n \approx \Gamma_{l/n} + \frac{1}{\sqrt{n}} X_{l/n}$
Thus as $n$ goes to infinity, the proportion process $S/n$ converges
to its fluid limit $\Gamma$. 
In subsection \ref{ss:Smodel} we have
argued that the random walk $S$, when ran indefinitely, is a model
for the whole push algorithm. This implies that $t\rightarrow \Gamma_t$
for $t\in(0,\infty)$ is a representation of
the fluid limit of the whole push algorithm.
To break it into rounds, we proceed as follows. Define
$\varphi_0 \doteq (1-\mu)$, this is the initial proportion of infected nodes
in the fluid limit. The first round in the prelimit lasts $k_n$ steps;
in the scaled limit time, this corresponds to the time point 
$\lim_n k_n/n = 1-\mu$, thus
the first round of the fluid limit ends at time $1-\mu$ and
the total proportion
of infected nodes after the first round is
$\varphi_1 \doteq \varphi_0 + \Gamma_{\varphi_0}$.
The next fluid round
will last a time interval of length $\varphi_1$ and will add an additional
$\Gamma_{\varphi_0 + \varphi_1} - \Gamma_{\varphi_0}$
proportion of infected nodes, bringing
the total proportion of infected nodes at the end of the second round to
$\varphi_2 \doteq \varphi_0 + \Gamma_{\varphi_1+\varphi_0}$
In general, the proportion of infected
nodes at the end of the $i^{th}$ round of the fluid limit will be
\begin{equation}\label{e:sdef}
\varphi_i = \varphi_{0} +\Gamma_{c_{i-1}},
\end{equation}
with $c_{i-1} \doteq \sum_{j=0}^{i-1} \varphi_j$.
The construction of $\{\varphi_0,\varphi_1,\varphi_2,...\}$ 
is the fluid limit version of that
of the sequence $(I_0,I_1,I_2,...)$ given in subsection \ref{ss:Smodel};
and indeed, Theorem \ref{t:cthm} implies 
$I_i/n \Rightarrow \varphi_i.$
The sequence $\{\varphi_i\}$ is increasing and deterministic;
in the fluid limit, each round infects a deterministic proportion of the
network. 
In the next subsection we will use the sequence
$\{\varphi_i\}$ to compute the weak limit of 
$\nu^{n}_\lambda$, the
number of push rounds
in a network
with $n$ nodes 
before the proportion of the infected nodes in the network hits $\lambda$.

\subsection{Weak limit of $\nu_\lambda^{n}$}\label{ss:verycool}
Recall that $\nu_\lambda^{n}$,  defined in
\eqref{e:nunlambda}, 
is the number of push rounds
needed before the proportion of the infected nodes in the network reaches 
$\lambda \in (0,1)$. Section \ref{s:numerical} presents  numerical
computations of the expectation of this random variable for a network
with $500$ nodes and for different fanout values and makes
several observations
about the results. Here,
we derive the weak limit of this random variable using
the sequence $\{\varphi_i \}$ of \eqref{e:sdef} and
Theorems \ref{t:cthm} and \ref{t:asympalpha}.

Define
$\bar{\nu}_\lambda\doteq \inf\{i: \varphi_i \ge \lambda \};$
$\bar{\nu}$ is the number of rounds that the fluid limit network needs
so that its proportion of infected nodes equals $\lambda$. Because
$\varphi$ is an increasing deterministic sequence, $\bar{\nu}_\lambda$ can be 
characterized as follows:
$\bar{\nu}_\lambda = i$ 
for $\lambda \in [\varphi_{i}, \varphi_{i+1})$.
Hence, as function of $\lambda$, $\bar{\nu}$ is
right continuous, piecewise constant and it jumps precisely by $1$
at the points $\{\varphi_1,\varphi_2,\varphi_3,...\}.$
As the final step of our analysis of the push algorithm we prove
\begin{theorem}\label{t:nulambdalimit}
$ \nu_\lambda^{n} \Rightarrow \bar{\nu}_{\lambda} $
for $\lambda \in (0,1) - \{\varphi_0,\varphi_1,\varphi_2,...\}$.
\end{theorem}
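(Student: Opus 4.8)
The plan is to deduce the weak convergence of the integer-valued random variable $\nu_\lambda^n$ from the convergence of the infection-level sequence $I_i/n$ to the deterministic sequence $\varphi_i$ established after \eqref{e:sdef} (which itself follows from Theorem \ref{t:cthm}), together with the control on the hitting-time fluctuations coming from Theorem \ref{t:asympalpha}. Fix $\lambda \in (0,1) - \{\varphi_0,\varphi_1,\varphi_2,\ldots\}$ and let $i_0 = \bar\nu_\lambda$, so that $\varphi_{i_0} > \lambda$ while $\varphi_{i_0-1} < \lambda$ (strict inequalities, because $\lambda$ avoids the $\varphi$-grid). Since $\nu_\lambda^n$ takes values in $\mathbb{Z}_+$, it suffices to show $\p(\nu_\lambda^n = i_0) \to 1$, and for this it is enough to show the two one-sided statements $\p(I_{i_0}/n \ge \lambda) \to 1$ and $\p(I_{i_0-1}/n \ge \lambda) \to 0$, because $\{\nu_\lambda^n \le i_0\} = \{I_{i_0}/n \ge \lambda\}$ and $\{\nu_\lambda^n \le i_0-1\} = \{I_{i_0-1}/n \ge \lambda\}$ by the monotonicity of $m \mapsto I_m$.

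First I would handle the finitely many rounds $0,1,\ldots,i_0$ by induction on $i$, establishing $I_i/n \Rightarrow \varphi_i$ and in fact $I_i/n \to \varphi_i$ in probability. The base case is $I_0/n = k_n/n \to 1-\mu = \varphi_0$. For the inductive step, recall from subsection \ref{ss:Smodel} that $I_i = k_n + S_{I_0 + \cdots + I_{i-1}}$ where the running index $I_0+\cdots+I_{i-1}$, divided by $n$, converges in probability to $c_{i-1} = \sum_{j=0}^{i-1}\varphi_j$ by the induction hypothesis; then Theorem \ref{t:cthm}, which gives $S_{\lfloor nt\rfloor}/n \Rightarrow \Gamma_t$ uniformly on compacts (the limit $\Gamma$ being continuous and deterministic), combined with a random-time-change / Slutsky argument, yields $S_{I_0+\cdots+I_{i-1}}/n \Rightarrow \Gamma_{c_{i-1}}$, hence $I_i/n \Rightarrow \varphi_0 + \Gamma_{c_{i-1}} = \varphi_i$ by \eqref{e:sdef}; since $\varphi_i$ is a constant, this convergence is in probability. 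Applying this at $i = i_0$ and $i = i_0-1$ and using $\varphi_{i_0} > \lambda > \varphi_{i_0-1}$ gives the two one-sided limits above, and therefore $\p(\nu_\lambda^n = i_0) \to 1$, i.e. $\nu_\lambda^n \Rightarrow \bar\nu_\lambda$.

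The main obstacle is the random-time-change step: one is evaluating $S$ not at a deterministic time $\lfloor nt\rfloor$ but at the random time $I_0+\cdots+I_{i-1}$, so Theorem \ref{t:cthm} is not directly applicable and one must argue that composing the weakly convergent path $t \mapsto S_{\lfloor nt \rfloor}/n$ with a time index that converges in probability to the constant $c_{i-1}$ still converges to $\Gamma_{c_{i-1}}$. This is exactly the kind of continuous-mapping argument available because the limit $\Gamma$ is a.s. continuous at the (deterministic) point $c_{i-1}$; one can make it rigorous either by the continuous mapping theorem applied to the composition map on $D_{\mathbb R}[0,\infty)\times[0,\infty)$ restricted to its continuity set, or more elementarily by an $\varepsilon$-$\delta$ estimate: choose $\delta$ so that $|\Gamma_s - \Gamma_{c_{i-1}}| < \varepsilon$ for $|s - c_{i-1}| < \delta$, note $\p(|(I_0+\cdots+I_{i-1})/n - c_{i-1}| \ge \delta) \to 0$ by induction, and on the complementary event control $|S_{I_0+\cdots+I_{i-1}}/n - \Gamma_{c_{i-1}}|$ by the supremum of $|S_{\lfloor ns\rfloor}/n - \Gamma_s|$ over $s$ in a fixed compact neighbourhood of $c_{i-1}$, which is $o_p(1)$ by Theorem \ref{t:cthm}. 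A secondary point, worth a sentence in the write-up, is that the exclusion of $\lambda \in \{\varphi_0,\varphi_1,\ldots\}$ is genuinely needed: at such a $\lambda$ the events $\{I_{i_0}/n \ge \lambda\}$ with $\varphi_{i_0} = \lambda$ sit exactly at a boundary, the $O(1/\sqrt n)$ fluctuations of $I_{i_0}/n$ around $\varphi_{i_0}$ described by Theorems \ref{t:cthm} and \ref{t:asympalpha} straddle $\lambda$ with non-degenerate probability, and $\nu_\lambda^n$ then oscillates between $i_0$ and $i_0+1$ rather than converging; mentioning this clarifies why the stated exceptional set is the right one.
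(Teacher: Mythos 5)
Your proof is correct, but it follows a genuinely different route from the paper's. The paper argues through the hitting times of the random walk: for $\lambda \in (\varphi_0,\varphi_1)$ it rewrites $\{\nu^{n}_\lambda = 1\}$ as $\{\tau^{n}_\lambda \le k_n\}$, centers and rescales as in \eqref{e:taucenterscale}, and invokes the $\sqrt{n}$-scale central limit theorem for $\tau^{n}_\lambda$ (Theorem \ref{t:asympalpha}) together with $\bar{\tau}_\lambda < 1-\mu$ to conclude; it then treats $(\varphi_1,\varphi_2)$ the same way and asserts that higher rounds are similar. You instead work at the level of the round-end infection proportions, proving by induction that $I_i/n \to \varphi_i$ in probability via the recursion $I_i = k_n + S_{I_0+\cdots+I_{i-1}}$, a random-time-change argument resting only on the fluid (law-of-large-numbers) part of Theorem \ref{t:cthm}, and then reading off $\p(\nu^n_\lambda = i_0) \to 1$ from the strict inequalities $\varphi_{i_0-1} < \lambda < \varphi_{i_0}$. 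Your approach buys two things: it does not need the finer diffusion-scale information of Theorem \ref{t:asympalpha} at all (you use it only to explain why the exceptional set $\{\varphi_i\}$ is necessary, which is a nice touch), and it handles an arbitrary round index $i_0$ by a uniform induction rather than the paper's case-by-case treatment of the first two rounds. Note that your induction also supplies a proof of the claim $I_i/n \Rightarrow \varphi_i$, which the paper states after \eqref{e:sdef} without argument. The paper's route, in exchange, yields rate information (how close to the round boundary $\tau^n_\lambda$ falls), which your LLN-only argument does not. One small point to make explicit in a final write-up is the case $i_0 = 0$ (i.e.\ $\lambda < \varphi_0$), where there is no round $i_0 - 1$ and the conclusion is immediate from $k_n/n \to \varphi_0 > \lambda$; your argument covers it but does not say so.
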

\begin{proof}
$ \nu_\lambda = \bar{\nu}_\lambda =0$ for $\lambda \le \varphi_0=1-\mu$; i.e., if
$\lambda$ is less than $1-\mu$ (the initial proportion of infected nodes),
the network already has more than $\lambda$ proportion of infected nodes
before any round begins.

To keep the 
proof short, we will treat the first two rounds; 
an argument that covers all rounds
will involve the same ideas.
Fix a $\lambda \in (\varphi_0,\varphi_1)$; we would like to show that
$\nu_\alpha^{n} \Rightarrow \bar{\nu}_\lambda = 1$, i.e., 
as $n$ goes to $\infty$, 
the infection level $\lambda$ is attained in the first round with
probability approaching $1$.
$\lambda < \varphi_1$, and the definitions \eqref{e:sdef} and \eqref{e:bartaudef}
of $\varphi_1$ and $\bar{\tau}_\lambda$ imply
\begin{equation}\label{e:lambdaless}
\bar{\tau}_\lambda < 1-\mu;
\end{equation}
this last inequality can also be expressed as follows:
at time $1-\mu$ the proportion of infected nodes in the fluid limit is
$\varphi_1$; $\lambda$ being strictly less than $\varphi_1$ and the
fluid limit $\Gamma$ being strictly increasing
and deterministic, it must be that the first time the fluid limit
has reached the infection level $\lambda$ must be before time $1-\mu$. 

$\nu_\alpha^{n} =1$ 
if and only if
$\tau^{n}_\lambda \le k_n$,
which is the same inequality as
\begin{equation}\label{e:taucenterscale}
\frac{1}{\sqrt{n}} \left( \frac{1}{\sqrt{n}}\left(
\tau^{n}_\lambda - n \bar{\tau}_\lambda\right) \right)\le 
 k_n/n - \bar{\tau}_\lambda.
\end{equation}
The first term on the left is a constant and converges to $0$;
Theorem \ref{t:asympalpha} says that the second term on the left
converges weakly to a finite random variable. Thus, their product
converges weakly to $0$. 
 \eqref{e:lambdaless} and $k_n/n \rightarrow (1-\mu)$ imply, on the other hand,
that the limit of the right side is strictly greater than $0$.
Thus, the probability
of the event expressed in this display, that is, the probability that
$\nu_\lambda = 1$, indeed converges to $1$. 

For $\lambda \in (\varphi_1,\varphi_2)$, we would like to show $\nu_\lambda^{n}$
converges weakly to $\bar{\nu}_\lambda = 2$, 
i.e., we want to show that the probability
of the event
$k_n  <  \tau_\lambda < k_n + S_{k_n}$
converges to $1$ ($k_n$ is the total number of random selections in the
first round and $k_n + S_{k_n}$ is the same total after the second round). 
A rescaling and centering similar to \eqref{e:taucenterscale}
and Theorems \ref{t:cthm}, \ref{t:asympalpha} imply this.
\end{proof}
An interesting question is the weak limit of 
$\nu_{\varphi_i}^{n}$. We have already covered the case $i=1$ in the
argument given in the numerical example
following Theorem \ref{t:asymptoticstn}: $\nu_{\varphi_1}^{n} \Rightarrow T_1$
where $T_1 \in \{1,2\}$ and takes these value with equal probability. 
The weak limit of the whole sequence
$\left\{\nu_{\varphi_i}^{n}, i=1,2,3,... \right\}$ requires a longer analysis and we leave it to future work.

Figure \ref{f:nuandN} shows two graphs:  first is that of
$\bar{\nu}_\lambda$ for an initial infection rate of $\mu =0.01$ and fanout
$c=3$; the second is $N(\lambda,5= 500\mu) ={\mathbb E}_5[\nu^{(500)}_\lambda]$;
the graph of $N(\lambda, 50)$ for $c\in \{1,7\}$ 
has been given earlier in Figure \ref{f:N50}.
\ninseps{nuandN}{The graphs of $\bar{\nu}_\lambda$ (the thicker curve)
and $N(\lambda,5)$; both for $c=3$}
{0.45}
Clearly, as Theorem \ref{t:nulambdalimit} implies, 
these graphs overlap except
around $\{\varphi_i\}$; at $\varphi_i$,
$\bar{\nu}$ jumps by $1$ while $N$ rapidly goes from $i-1$ to $i$. Theorem \ref{t:nulambdalimit}
also implies that the $\lambda$ interval over which this transition occurs shrinks to the single
point $\varphi_i$ as the network size grows to $\infty.$

\section{Conclusion}\label{s:conclusion}
The starting point
of the analysis of the present paper is taking
the random selections in a push round as the atomic operation of the
push algorithm and defining
a random walk whose each step corresponds to a random selection. The
main body of the analysis consists of computing weak limits of this
random walk and its various functions.
We expect
these ideas
to be directly applicable
to other epidemic algorithms (such as
those considered in \cite{karp2000randomized, sanghavi2007gossiping})
over
fully connected networks. 

Fully connectedness is one of the natural limits of 
the collection of possible topologies over a given collection of nodes.
As the topology of a network approaches full connectivity
one expects the results for the latter to be good approximations for 
the former.
One can further use results on quantities in fully connected networks as upper and
lower bounds on the same quantities in
other network topologies. After a detailed analysis of this basic case,
an interesting and important direction is the
generalization to more complex network topologies and
structures, such as those considered
in \cite{chierichetti2011rumor,ganesh2005effect, anagnostopoulos2011information}.

To the best of our understanding, most of the literature on the
asymptotic analysis of epidemic algorithms focus on obtaining upperbounds
on the tail probabilities of the total
dissemination time, $\nu_1^{n}$ in our notation.
For this, authors often use
large deviations results such as Chernoff's bound. However, the
underlying processes in these models may have fluid and diffusion limits
and these limits can give more precise information about the
distribution of key random variables, such as the total dissemination time.
We hope that the present work provides an example of how
this path can be followed in the context of a simple model.

\section*{Acknowledgement}
Ali Devin Sezer's work on this article has been supported by the 
Rbuce-up European Marie Curie project, \url{http://www.rbuce-up.eu/}.
\appendix
\section{Proofs}
\begin{proof}[Proof of Theorem \ref{t:cthm}]
To avoid confusion between discrete and continuous time parameters, we will
show continuous time parameters in parentheses, e.g., we will write $X^n(t)$
instead of $X^n_t.$
We begin by assuming that  $c=1$, the modifications for $c>1$ will be straightforward.
$k_n$ is assumed to grow with $n$ so that $\lim_n (n-k_n)/n = \mu \in(0,1)$.
Let $C_b({\mathbb R}^2)$ denote the set of bounded and continuous
functions on ${\mathbb R}^2$; and $C^2_0({\mathbb R}^2)$ the set
of twice differentiable functions with compact support with
continuous  Hessians. Define $T_n:C_b({\mathbb R}^2) \rightarrow
C_b({\mathbb R}^2)$ as 
\begin{equation}\label{e:defTn}
[T_n(f)](x) = {\mathbb E}_x [ f(X^n(1/n), \Gamma^n(1/n))], x\in
{\mathbb R}^2,
\end{equation}
where the subscript $x$ of ${\mathbb E}_x$ denotes conditioning on
$(X^n(0),\Gamma^n(0)) = x = (x_1,x_2).$

Define
\begin{equation}\label{e:defAn}
A_n \doteq n (T_n - I),
\end{equation}
where $I$ denotes the identity operator on $C_b({\mathbb R}^2).$
Let us compute $A_n$ explicitly for $ f\in C^2_0({\mathbb R}^2)$.
The expectation in \eqref{e:defTn} is conditioned on\\
$(X^n(0),\Gamma^n(0)) = (x_1,x_2)$, i.e., on
\begin{equation}\label{e:condition}
\frac{S_0 - \gamma_0}{\sqrt{n}} = x_1, \frac{\gamma_0}{n} = x_2.
\end{equation}
This, \eqref{e:ilk}, \eqref{e:ilkrec} 
and the definition of
$X^n$ imply
\begin{align*}
X^n(1/n) = \frac{ S_1 - \gamma_1}{\sqrt{n}}&= \frac{S_0
-\gamma_0}{\sqrt{n}}
+ \frac{S_1 - S_0}{\sqrt{n}} - \frac{\gamma_1 - \gamma_0}{\sqrt{n}}\\
&= x_1 +\left( X_1 -\left( \frac{n-k_n}{n-1} - \frac{n}{n-1}x_2\right)\right)/\sqrt{n}\\
\Gamma^n(1/n) &= \frac{\gamma_1}{n} = \frac{1}{n}\left( \gamma_0 \frac{n-2}{n-1} + \frac{n-k_n}{n-1}\right)\\
&=  x_2 -  \frac{x_2}{n-1} + \frac{n-k_n}{n(n-1)}.
\end{align*}
Define
\begin{align} \label{star}
Y_1 & \doteq  X_1 - \left( \frac{n-k_n}{n-1} - \frac{n}{n-1}x_2\right), \\  
y_2 & \doteq  -  \frac{x_2}{n-1} + \frac{n-k_n}{n(n-1)},~~~
\Delta \doteq ( Y_1/\sqrt{n}, y_2) \notag
\end{align}
where $X_1$ refers to the random variable given in
(\ref{e:bern}).
${\mathbb E}_x[Y_1]$ and $E_x[Y_1^2]$ will be useful in what
follows, so let us compute them first. Equations \eqref{e:condition}
and (\ref{e:bern}) give
\begin{equation}\label{e:expX1}
{\mathbb E}_x[X_1] = \frac{n-k_n}{n-1} - \frac{n}{n-1}x_2
-\frac{\sqrt{n}}{n-1}x_1,
\end{equation}
which gives
\begin{equation}\label{e:expY1}
{\mathbb E}_x[Y_1] = {\mathbb E}_x[X_1] - \left( \frac{n-k_n}{n-1} -
\frac{n}{n-1}x_2\right) = -\frac{\sqrt{n}}{n-1}x_1.
\end{equation}

On the other hand
${\mathbb E}_x[X_1]={\mathbb E}_x[X_1^2] = (n-k_n - nx_2)/(n-1)$ implies
\begin{align}\label{e:secY1}
{\mathbb E}_x[ Y_1^2] &= 
{\mathbb E}_x[ X_1^2]  + 2{\mathbb E}_x[Y_1]d_n -d_n^2\\
		& = (n-k_n - nx_2)/(n-1)  -\frac{\sqrt{n}}{n-1}x_1d_n -d_n^2\notag
\end{align}
where $d_n$ refers to the constant $(- (n-k_n)+ n x_2)/(n-1)$ in $Y_1$'s definition.

The expectation that occurs in the definition \eqref{e:defTn} of
$T_n$ written in terms of the vector $\Delta$ is
$
{\mathbb E}_x[ f(X^n(1/n), \Gamma^n(1/n))] = {\mathbb E}_x[ f( x +
\Delta )].
$
Let $Df$ denote the gradient of $c$ and $Hf$ its Hessian; and let
$\Delta^2$ denote the tensor product $\Delta \otimes \Delta.$ Using
$f$'s Taylor's expansion in the last display gives
\begin{equation}\label{e:taylorexpansion}
{\mathbb E}_x[ f(X^n(1/n),\Gamma^n(1/n))] = {\mathbb E}_x\left[
f(x) + \langle Df(x), \Delta\rangle  + \frac{1}{2} \langle
Hf(x+\theta_n), \Delta^2\rangle \right]
\end{equation}
where $\theta_n$ is a random vector that lies on the line segment
connecting $0$ to $\Delta.$ Let us deal with each of the terms that
appear in \eqref{e:taylorexpansion} one by one. The function $f(x)$
is not random and so it comes out of the expectation. The first
order term is
$\langle Df(x), \Delta \rangle = f_{x_1}(x) Y_1/\sqrt{n} + f_{x_2}(x) \, y_2.$
That the second term is deterministic and \eqref{e:expY1} give
\begin{equation}{\mathbb E}_x \left[ \langle Df(x), \Delta \rangle\right]
 =  -f_{x_1}(x) \frac{x_1}{n-1} + f_{x_2}(x) \, y_2.  \label{expinner}
\end{equation}
${\mathbb E}_x\left[ \langle Hf(\theta_n), \Delta^2\rangle\right]$
is
\begin{align*}
&
\frac{1}{n} {\mathbb E}_x\left[ f_{x_1,x_1}(x+\theta_n) Y_1^2
\right] + 2 y_2 \frac{1}{\sqrt{n}} {\mathbb E}_x
\left[ f_{x_1,x_2}(x+\theta_n) Y_1 \right]
\\
&~~~ 
+ (y_2)^2 f_{x_2,x_2}(x+\theta_n) .
\end{align*}
Substituting these in \eqref{e:defAn} yields
\begin{align}\label{e:AnafterTaylor}
A_n f&= -f_{x_1} \frac{n}{n-1}x_1 +  f_{x_2}(x) (n y_2 ) +
\frac{1}{2}
{\mathbb E}_x\left[ f_{x_1,x_1}(x+\theta_n) Y_1^2 \right] \\
&~~~ +y_2\sqrt{n} {\mathbb E}_x \notag \left[
f_{x_1,x_2}(x+\theta_n) Y_1 \right] +\frac{1}{2} n (y_2)^2
f_{x_2,x_2}(x+\theta_n)
\end{align}
Now let us compute the limits of each of the terms in the last sum
 as $n\rightarrow \infty$. The first term converges to $-f_{x_1} x_1$.
The definition of $y_2$ and \eqref{e:defmu} imply
\begin{equation}\label{e:firstorderterm}
\lim_{n \rightarrow \infty} f_{x_2}(x) (n y_2 )  = f_{x_2}(
\mu-x_2).
\end{equation}
Note that
\begin{equation}\label{e:theta0}
\theta_n \rightarrow 0
\end{equation}
uniformly, because $\Delta$ is bounded. This and the continuity of
$Hf$ imply
\begin{equation}\label{e:x11term}
\lim_{n\rightarrow \infty} \frac{1}{2} {\mathbb E}_x\left[
f_{x_1,x_1}(x+\theta_n) Y_1^2 \right]  = \frac{1}{2}
f_{x_1,x_1}(x) \lim_{n\rightarrow \infty} {\mathbb E}_x\left[
Y_1^2 \right].
\end{equation}
The observation \eqref{e:theta0}, continuity of $Hf$ and $|(y_2)^2| = O(1/n^2)$
imply
\begin{equation}\label{e:x22term}
\lim_{n\rightarrow 0} \frac{1}{2} n (y_2)^2
f_{x_2,x_2}(x+\theta_n)
 = 0.
\end{equation}
Boundedness of $Y_1$, \eqref{e:expY1}, \eqref{e:theta0},
 continuity of $Hf$ an $|y_2| = O(1/n)$ imply
\begin{equation}\label{e:mixedterm}
\lim_{n\rightarrow \infty} y_2\sqrt{n} {\mathbb E}_x \left[
f_{x_1,x_2}(x+\theta_n) Y_1 \right] = 0.
\end{equation}
Letting $n\rightarrow \infty$ in \eqref{e:secY1} gives
\begin{equation}
\label{e:varlimitX1}
\lim_{n\rightarrow \infty} {\mathbb E}_x\left[ Y_1^2 \right] =
(\mu - x_2) ( 1 - (\mu-x_2)).
\end{equation}

The equations \eqref{e:AnafterTaylor}, along with
\eqref{e:firstorderterm},
 \eqref{e:x11term}, \eqref{e:x22term}, \eqref{e:mixedterm} and
\eqref{e:varlimitX1} yield
$
\lim_{n\rightarrow \infty} A_n f = Af
$
where
$
Af \doteq \langle Df, (-x_1, \mu -x_2 ) \rangle + \frac{1}{2}
(\mu-x_2)(1-(\mu-x_2)) f_{x_1,x_1}.
$
One can check directly that $A$ is the infinitesimal generator of
the semigroup $T(\cdot)$
 defined by the process $(X_t,\Gamma_t)$.
It follows from its definition that $T(\cdot)$ is a Feller
semigroup on $C_b({\mathbb R}^2)$. Furthermore,
\cite[Proposition 3.2,
page 17]{ethier1986t} 
imply that
$C^\infty({\mathbb R}^2)$ forms a core for the generator $A$. Thus,
\cite[Theorem 1.2, page 31]{ethier1986t} and \cite[Theorem
2.6, page 168]{ethier1986t} imply $(X^n,\Gamma^n)\Rightarrow
(X,\Gamma)$.

Modifications for $c>1$ are as follows. 
The variables $Y_1$ and $y_2$
are now defined as
 \[
 Y_1 \doteq X_1 - c\, \left( \frac{n-k_n}{n-1} - \frac{n}{n-1}x_2\right),
\qquad y_2 \doteq -  \frac{x_2\,c }{n-1} + \frac{(n-k_n)\, c}{n(n-1)};
 \]
 where $X_1$ has the hypergeometric distribution \eqref{X1}.
Then
\begin{equation}\label{e:Ey1c}
\E_x[Y_1]=-c\,\frac{\sqrt{n}}{n-1} x_1.
\end{equation}
\eqref{e:secY1} becomes
${\mathbb E}_x[ Y_1^2] = {\mathbb E}_x[ X_1^2]  + 2{\mathbb E}_x[Y_1]d_n -d_n^2 
=  {\mathbb E}_x[ X_1^2]  -\frac{c\sqrt{n}}{n-1}x_1d_n -d_n^2$
where $d_n$ is now  $c(- (n-k_n)+ n x_2)/(n-1)$. For the asymptotic analysis,
we only need the limit of the last display as $n\rightarrow \infty.$ For $n$
large, one can approximate the hypergeometric $X_1$ as Binomial$(p^*,c)$ 
with success probability $p^* = (k_n - nx_2)/n$, whose second moment is
$ c p^* + c(c-1)(p^*)^2.$ Substituting this in the last display and letting
$n\rightarrow \infty$ we get
\begin{equation}\label{e:varlimitx1c}
\lim_{n\rightarrow \infty} \E_x [Y_1^2] = c\, (\mu-x_2)(1-(\mu-x_2))
\end{equation}
which generalizes \eqref{e:varlimitX1} to $ c > 1.$
On the other hand, we have\\
${\mathbb E}_x \left[ \langle Df(x), \Delta \rangle\right]
 =  -f_{x_1}(x) \, c\,\frac{x_1}{n-1} + f_{x_2}(x) \, y_2$
(generalization of \eqref{expinner}).
It follows that
\begin{align*}
A_n f&= -f_{x_1} \,c\, \frac{n}{n-1}x_1 +  f_{x_2}(x) (n y_2 ) +
\frac{1}{2}
{\mathbb E}_x\left[ f_{x_1,x_1}(x+\theta_n) Y_1^2 \right] \\
&~~~ +y_2\sqrt{n} {\mathbb E}_x \notag \left[
f_{x_1,x_2}(x+\theta_n) Y_1 \right] +\frac{1}{2} n (y_2)^2
f_{x_2,x_2}(x+\theta_n).
\end{align*}
Same arguments as in the $c=1$ case give
$\lim_{n \rightarrow \infty} f_{x_2}(x) (n y_2 )  = f_{x_2}\, c\, (
\mu-x_2).$
The last two displays \eqref{e:Ey1c} and \eqref{e:varlimitx1c} imply
$
\lim_{n\rightarrow \infty} A_n f = Af
$
where
$
Af \doteq \langle Df, c\, (-x_1, \mu -x_2 ) \rangle + \frac{1}{2} \,
c\,  (\mu-x_2)(1-(\mu-x_2)) f_{x_1,x_1}.
$
The rest of the proof is the same as in the case of $c=1.$
\end{proof}
\begin{proof}[Proof of Proposition \ref{p:tn}]
Define the stopping time 
\[
\tau_n' \doteq \inf \left 
\{ l : \frac{1}{\sqrt{n} } (S_l - \gamma_l) \ge \frac{ n\Gamma_{1-\mu} - \gamma_{\lfloor nt_n\rfloor } }{\sqrt{n}} \right\}.
\]
$S_l \ge n \Gamma_{1-\mu}$
is the same as
$ (S_l - \gamma_l)/\sqrt{n} \ge 
(n\Gamma_{1-\mu} - \gamma_l)/\sqrt{n}$.
That $\gamma_l$ is increasing in $l$ implies that the right side of this inequality is decreasing
in $l$. This implies 
\begin{equation}\label{e:boundTnTn'}
\{ \tau_n \le n t_n \} \subset \{ \tau_n' \le n t_n \}.
\end{equation}
We know from Theorem \ref{t:cthm} 
that $\gamma_{\lfloor n t_n \rfloor}/n \rightarrow \Gamma_t.$ 
This and \eqref{e:constrainttn}
imply
$\lim_n (n\Gamma_{1-\mu} - \gamma_{\lfloor nt_n\rfloor } )/{\sqrt{n}} $
$\rightarrow$ $ C \sqrt{\var(X_{1-\mu})}$.
Now define 
$\tau' \doteq \inf $ $ \{t: \bar{X}_t \ge C \sqrt{\var(\bar{X}_{1-\mu})} \}.$
The last two displays, that $t\rightarrow e^{ct}$ is monotone increasing,
 $t_n\rightarrow (1-\mu)$ (see \eqref{e:approxtn})
 and Theorem \ref{t:cthm} imply 
$\lim_{n\rightarrow \infty} \p(\{ \tau_n' \le n t_n \})$ $\le$
$ \p (\tau' \le (1-\mu) ).$
$\bar{X}$ is a stochastic integral against a Brownian motion. Therefore,
if we measure time using its quadratic variation it will be a standard
Brownian motion \cite[Theorem 4.6, page 174]{karatzas1991brownian}. This and
 \cite[Equation (6.3), page 80]{karatzas1991brownian}
give
$\p( \tau' \le (1-\mu) ) = \sqrt{ \frac{2}{\pi} } \int_{C}^{\infty} e^\frac{-x^2}{2}dx.$
The last equality  and \eqref{e:boundTnTn'} imply \eqref{e:boundonTn}.
\end{proof}
\bibliography{pushanalysis}

\end{document}